\newcommand{\xym}{\xymatrix@1@=14pt@M=2pt}
\tikzset{
    vector/.style={decorate, decoration={snake}, draw},
	provector/.style={decorate, decoration={snake,amplitude=2.5pt}, draw},
	antivector/.style={decorate, decoration={snake,amplitude=-2.5pt}, draw},
    fermion/.style={draw=black, postaction={decorate},
        decoration={markings,mark=at position .55 with {\arrow[draw=black]{>}}}},
    fermionbar/.style={draw=black, postaction={decorate},
        decoration={markings,mark=at position .55 with {\arrow[draw=black]{<}}}},
    fermionnoarrow/.style={draw=black},
    gluon/.style={decorate, draw=black,
        decoration={coil,amplitude=4pt, segment length=5pt}},
    scalar/.style={dashed,draw=black, postaction={decorate},
        decoration={markings,mark=at position .55 with {\arrow[draw=black]{>}}}},
    scalarbar/.style={dashed,draw=black, postaction={decorate},
        dwecoration={markings,mark=at position .55 with {\arrow[draw=black]{<}}}},
    scalarnoarrow/.style={dashed,draw=black},
    electron/.style={draw=black, postaction={decorate},
        decoration={markings,mark=at position .55 with {\arrow[draw=black]{>}}}},
	bigvector/.style={decorate, decoration={snake,amplitude=4pt}, draw},
}
\newtheorem{theorem}{Theorem}[section]
\newtheorem{prop}[theorem]{Proposition}
\newtheorem{lemma}[theorem]{Lemma}
\theoremstyle{definition}
\newtheorem{dfn}[theorem]{Definition}
\newtheorem{dfn/lem}{Definition/Lemma}
\theoremstyle{remark}
\newtheorem{rmk}[theorem]{Remark}
\newtheorem{eg}[theorem]{Example}
\newcommand{\nc}{\newcommand}
\nc{\on}{\operatorname}
\nc{\mc}{\mathcal}
\nc{\A}{\on{A}}
\nc{\ZZ}{\mathbb{Z}}
\nc{\PP}{\mathbb{P}}
\nc{\mt}{\langle t \rangle}
\nc{\mxn}{\langle x_1 ,...,  x_n \rangle }
\nc{\Pn}{\mathbb{P}_{n}}
\nc{\Zplus}{\mathbb{Z}_{\geq 0}}
\nc{\Sk}{\on{Skew}}
\nc{\funmod}{\on{\fun-mod}}
\nc{\ra}{\rightarrow}
\nc{\Mod}{\on{Mod}}
\nc{\Amod}{\on{Mod}(\A)_{\fun}}
\nc{\mtmod}{\on{Mod}(\mt)_{\fun}}
\nc{\mxnmod}{\on{Mod}(\Pn)_{\fun}}
\nc{\C}{\mathcal{C}}
\nc{\fun}{\mathbb{F}_1}
\nc{\Ksplit}{\on{K}_{0}^{sp}}
\nc{\bch}{\Phi_{k}}
\nc{\Adj}{\on{Adj}}
\nc{\wt}{\widetilde}
\DeclareMathOperator{\lcm}{lcm}
\begin{document}

 \title{ Split Grothendieck rings of rooted trees and skew shapes via monoid representations.  }
  \author{David Beers and Matt Szczesny}
  \date{}

\begin{abstract}
We study commutative ring structures on the integral span of rooted trees and $n$-dimensional skew shapes. The multiplication in these rings arises from the smash product operation on monoid representations in pointed sets. We interpret these as Grothendieck rings of indecomposable monoid representations over $\fun$ - the "field" of one element. We also study the base-change homomorphism from $\mt$-modules to $k[t]$-modules for a field $k$ containing all roots of unity, and interpret the result in terms of Jordan decompositions of adjacency matrices of certain graphs. 
\end{abstract} 

  \maketitle

\section{Introduction}

In this paper we consider commutative ring structures on the integral spans of rooted trees and $n$--dimensional skew shapes. The product in these rings arises by first interpreting the corresponding combinatorial structure as a representation of a monoid in pointed sets, and then using the smash product, which defines a symmetric monoidal structure on the category of such representations. We proceed to explain the construction in greater detail. 

To a monoid $\A$, one may associate a category $\Amod$ of "representations of $\A$ over the field of one element", whose objects are finite pointed sets with an action of $\A$. The terminology comes from the general yoga of $\fun$, where pointed sets are viewed as vector spaces over $\fun$, and monoids are viewed as non-additive analogues of algebras (see \cite{CLS, L1}).  Given $\Amod$, their categorical coproduct $M \oplus N$ is given by the wedge sum $M \vee N$ and the product by the Cartesian product $M \times N$ (equipped with diagonal $\A$-action). One may also consider a reduced version of the Cartesian product -- the smash product $M \wedge N$, with $\A$--action $a(m \wedge n) = am \wedge an$, which while not a categorical product, defines a symmetric monoidal structure on  $\Amod$. $\oplus$ and $\wedge$ are compatible in the sense that $$M \wedge (K \oplus L) \simeq M \wedge K \oplus M \wedge L.$$

In certain cases, objects of $\Amod$ have a pleasant interpretation in terms of familiar combinatorial structures. For example, when $\A = \mt$, the free monoid on one generator $t$, we may associate to $M \in \mtmod$ a graph $\Gamma_M$ which encodes the action of $t$ on $M$. The vertices of $\Gamma_M$ correspond to the non-zero elements of $M$ (where the basepoint plays the role of zero), and the directed edges join $m \in M$ to $t \cdot m$. The possible connected graphs arising this way, corresponding to indecomposable representations, are easily seen to be of two types - rooted trees and wheels:

\begin{minipage}{.5\textwidth}
\begin{center}
\begin{tikzpicture}
\draw [ultra thick,->] (0,0) -- (0.9,0.9);
\draw [fill] (0,0) circle [radius=0.1];
\draw [fill] (1,1) circle [radius=0.1];
\draw [ultra thick,->] (2,0) -- (1.1,0.9);
\draw [fill] (2,0) circle [radius=0.1];
\draw [ultra thick,->] (1,1) -- (1.9,1.9);
\draw [fill] (2,2) circle [radius=0.1];
\draw [fill] (4,0) circle [radius=0.1];
\draw [fill] (3,1) circle [radius=0.1];
\draw [ultra thick,->] (4,0) -- (3.1, 0.9);
\draw [ultra thick,->] (3,1) -- (2.1,1.9);
\draw [fill] (2,1) circle [radius=0.1];
\draw [ultra thick,->] (2,1) -- (2,1.9);
\draw [fill] (2,3) circle [radius=0.1];
\draw [ultra thick,->] (2,2) -- (2,2.9);
\draw [fill] (3,2) circle [radius=0.1];
\draw [ultra thick,->] (3,2) -- (2.1,2.9);
\draw [ultra thick,->] (2,3) -- (2,3.9);
\draw [fill] (2,4) circle [radius=0.1];
\node at (2,-1) {Rooted tree};
\end{tikzpicture}
\end{center}
\end{minipage}
\begin{minipage}{.5\textwidth}
\begin{center}
\begin{tikzpicture}
\draw [ultra thick,->] (0,0) -- (0.9,0);
\draw [ultra thick,->] (1,0) -- (1,0.9);
\draw [ultra thick,->] (1,1) -- (0.1,1);
\draw [ultra thick,->] (0,1) -- (0,0.1);
\draw [ultra thick,->] (0,-1) -- (0,-0.1);
\draw [ultra thick,->] (-1,-1) -- (-0.1,-0.1);
\draw [ultra thick,->] (2,2) -- (1.1,1.1);
\draw [ultra thick,->] (2,3) -- (2,2.1);
\draw [ultra thick,->] (3,2) -- (2.1,2);
\draw [fill] (0,0) circle [radius=0.1];
\draw [fill] (1,1) circle [radius=0.1];
\draw [fill] (1,0) circle [radius=0.1];
\draw [fill] (0,1) circle [radius=0.1];
\draw [fill] (-1,-1) circle [radius=0.1];
\draw [fill] (0,-1) circle [radius=0.1];
\draw [fill] (2,2) circle [radius=0.1];
\draw [fill] (2,3) circle [radius=0.1];
\draw [fill] (3,2) circle [radius=0.1];
\node at (1,-2) {Wheel};
\end{tikzpicture}
\end{center}
\end{minipage}

Given indecomposable $M,N \in \mtmod$ (corresponding to a tree or wheel), one can ask how $\Gamma_{M \wedge N}$ can be computed from $\Gamma_M$ and $\Gamma_N$. We give the answer in Section \ref{mt}, in the form of a simple algorithm, and show that $\Gamma_{M \wedge N}$ corresponds to the tensor product of graphs $\Gamma_M \otimes \Gamma_N$ in the sense of \cite{Weich}. 

In a similar vein, $n$--dimensional skew shapes can be interpreted as representations of $\mxn$ - the free commutative monoid on $n$ generators $x_1, \cdots, x_n$. We illustrate this for $n=2$, where the shape $S$

\begin{center}
\Ylinethick{1pt}
\gyoung(;,\bullet;;,:;\bullet;;)
\end{center}
determines a module over the the free commutative monoid on two generators $ \langle x_1, x_2 \rangle$, whose non-zero elements correspond to the boxes in the diagram.   $x_1$ acts by  moving one box to the right, and $x_2$ by moving one box up, until the edge of the diagram is reached, and by $0$ beyond that. Connected skew shapes yield indecomposable representations of $\mxn$, and we may once again ask how to decompose $M_S \wedge M_T$ into $\oplus_i M_{U_i}$, where $U_i$ are connected skew shapes. The answer is given in Section \ref{skew}, where we prove the following theorem:

\begin{theorem}
If $S_1$ and $S_2$ $n$-dimensional skew shapes, then
\begin{equation*}
M_{S_1} \wedge M_{S_2} = \bigoplus_{t \in \mathbb{Z}^n} M_{S_1 \cap (S_2 + t)}
\end{equation*}
\end{theorem}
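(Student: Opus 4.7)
The plan is to construct an explicit $\mxn$-equivariant bijection between the two sides, indexed by the ``offset'' of pairs in the smash product. I would begin by unwinding the definitions. Writing each $M_{S_i}$ as the pointed set $S_i \sqcup \{0\}$ with $x_j$ acting by translation by the standard basis vector $e_j$ (and by $0$ if the translate leaves $S_i$), the smash product $M_{S_1} \wedge M_{S_2}$ has as its nonzero elements the pairs $(s_1, s_2) \in S_1 \times S_2$, with the diagonal action $x_j \cdot (s_1, s_2) = (s_1 + e_j, s_2 + e_j)$ when both translates remain inside their respective shapes, and $0$ otherwise.

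The key observation is that the offset $t := s_1 - s_2 \in \ZZ^n$ is constant along any $\mxn$-orbit, since $x_j$ shifts both coordinates by the same vector $e_j$. This suggests partitioning $S_1 \times S_2$ by the value of $t$. For each $t \in \ZZ^n$, the fibre $P_t := \{(s_1, s_2) \in S_1 \times S_2 : s_1 - s_2 = t\}$ is in natural bijection with $S_1 \cap (S_2 + t)$ via the projection $(s_1, s_2) \mapsto s_1$. Finiteness of $S_1, S_2$ ensures that $P_t$ is nonempty for only finitely many $t$, so the direct sum on the right-hand side is actually a finite wedge.

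I would then verify that the projection $P_t \to S_1 \cap (S_2 + t)$ is $\mxn$-equivariant when the target is equipped with the evident $\mxn$-module structure (translations that stay inside the subset, $0$ otherwise). Indeed, $x_j \cdot s_1 = s_1 + e_j$ is nonzero in $M_{S_1 \cap (S_2 + t)}$ iff $s_1 + e_j \in S_1$ and $(s_1 + e_j) - t = s_2 + e_j \in S_2$, which is precisely the condition that $x_j \cdot (s_1, s_2)$ is nonzero in $M_{S_1} \wedge M_{S_2}$. Assembling these fibre-by-fibre isomorphisms and identifying all basepoints yields the desired decomposition, since $\oplus$ in $\mxnmod$ is the wedge sum of pointed sets.

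There is no real obstacle here beyond bookkeeping: one must verify that the wedge basepoints glue correctly, and that $M_{S_1 \cap (S_2 + t)}$ makes sense for intersections that need not be skew shapes in the classical sense but still carry a well-defined $\mxn$-module structure by restriction. The substantive content of the proof is the single observation that the diagonal $\mxn$-action preserves the offset $t$, reducing the smash product to a disjoint union (with common basepoint) of translation-invariant pieces.
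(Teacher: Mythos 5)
Your proof is correct and is essentially the same as the paper's: the paper defines a single map $\Psi((a,b)) = a_{a-b}$ landing in the summand indexed by the offset $t = a - b$, which is exactly your fibre-by-fibre projection $(s_1,s_2) \mapsto s_1 \in S_1 \cap (S_2 + t)$ assembled over all $t$, and both arguments hinge on the same observation that the diagonal action preserves the offset. One small remark: your hedge that the intersections ``need not be skew shapes in the classical sense'' is unnecessary---the paper's Lemma~\ref{skew_int} shows $S_1 \cap (S_2+t)$ is always a skew shape (convexity is inherited by intersections), so the summands are genuine objects of $\Sk_n$.
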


In other words, the $U_i$ are those skew shapes that occur in the intersection of one shape with a translate of the other. 

Our results may be phrased in a more structured way as follows. Given a  monoid $\A$, and a monoidal sub-category $\C \subset (\Amod, \wedge)$, we may consider the split Grothendieck ring $K^{split}(\C)$. Elements of $K^{split}(\C)$ may be identified with formal integer linear combinations $\sum a_i [M_i]$ of isomorphism classes of $[M_i] \in \on{Iso}(\C)$, subject to the relations 
\[
[M \oplus N] \sim [M] + [N],
\]
with multiplication induced by the smash product. In our examples, $K^{split}(\C)$ consists of integer linear combinations of trees/wheels or skew shapes. The results of this paper amount to an explicit combinatorial description of the product in $K^{split}(\C)$. 

Structures over $\fun$ may be based-changed to those over a field (or any commutative ring) $k$. We denote this functor $\otimes_{\fun} k$. $\A \otimes_{\fun} k$ is the monoid algebra $k[\A]$, and for $M \in \Amod$, $M \otimes_{\fun} k$ the $k[\A]$-module spanned over $k$ by elements of $M$. $k[\A]$ is a $k$-bialgebra, and so its category of modules monoidal. The functor $\otimes_{\fun} k$ is monoidal, and so induces a ring homomorphism $$\Phi_k : \Ksplit(\Amod) \ra \Ksplit(\on{Mod}_{k[\A]}).$$ We study this homomorphism in Section \ref{bch} in the simple case of the monoid $\A=\mt$, in which case generators of $\Ksplit(\Mod(k[t]))$ can be identified with Jordan blocks. Understanding $\Phi_k$ in this case reduces to computing the Jordan form of the adjacency matrices of the trees/wheels above. We show the image of $\Phi_k$ is spanned by nilpotent Jordan blocks and cyclotomic diagonal matrices.

\subsection{Outline of paper}
Section \ref{MM} recalls basic facts regarding monoids and the category $\Amod$, and define the split Grothendieck ring $\Ksplit(\Amod)$. In Section \ref{mt} we consider the example of $\A = \mt$ - the free monoid on one generator, and identify the product in $\Ksplit(\mtmod)$ with the graph tensor product of trees/wheels. In Section \ref{bch} we consider the base-change homomorphism $\bch: \Ksplit(\mtmod) \ra \Ksplit(\on{Mod}_{k[t]})$ and describe its image in terms of the Jordan decomposition of the adjacency matrix of the corresponding graph. Section \ref{skew} is devoted to the example of $\A = \Pn = \mxn$ - the free commutative monoid on $n$ generators, and a certain subcategory of $\mxnmod$ corresponding to $n$-dimensional skew shapes. We give an explicit description of the product in $\Ksplit(\mxnmod)$ in terms of intersections of skew shapes. 

\noindent {\bf Acknowledgements:} This paper emerged from an undergraduate research project at Boston University completed by the first author with the second as faculty mentor. We gratefully acknowledge the generous support of the BU UROP program during the research and writing phase of this project. The second author is supported by a Simons Foundation Collaboration Grant.

\section{Monoids and their modules} \label{MM}

A \emph{monoid} $\A$ will be an associative semigroup with identity $1_A$ and zero $0_A$ (i.e. the absorbing element). We require
\[
 1_A \cdot a = a \cdot 1_A = a \hspace{1cm} 0_A \cdot a = a \cdot 0_A = 0_A \hspace{1cm} \forall a \in A
\]
Monoid homomorphisms are required to respect the multiplication as well as the special elements $1_A, 0_A$.


\begin{eg}
Let $\fun = \{ 0, 1\}$ with $$ 0 \cdot 1 = 1 \cdot 0 = 0 \cdot 0 = 0 \textrm{ and } 1 \cdot 1 = 1 .$$ We call $\fun$ \emph{the field with one element}. 
\end{eg}

\begin{eg}
Let $$\Pn := \mxn = \{ x^{r_1}_1 x^{r_2}_2 \cdots x^{r_n}_n \vert r=(r_1, r_2, \cdots, r_n) \in \Zplus^n \} \cup \{ 0 \},$$ the set of monomials in $x_1, \cdots, x_n$, with the usual multiplication. We will often write elements of $\Pn$ in multiindex notation as $x^r, r \in \Zplus^n$, in which case the multiplication is written as $$ x^r \cdot x^s = x^{r+s} .$$ We identify $x^0$ with $1$. $\Pn$ has a natural $\ZZ^n_{\geq 0}$-grading obtained by setting $deg(x_i) = e_i$ - the $i$th standard basis vector in $\ZZ^n$. 
\end{eg}

$\fun$ and $\Pn$ are both commutative monoids. 

\subsection{The category $\Amod$}

\begin{dfn}
Let $\A$ be a monoid. An \emph{$\A$-module} is a pointed set $(M,0_M)$ (with $0_M \in M$ denoting the basepoint), equipped with an action of $\A$. More explicitly, an $\A$-module structure on $(M, 0_M)$ is given by a map
\begin{align*}
\A \times M  & \rightarrow M \\
(a, m) & \rightarrow a \cdot m
\end{align*}
satisfying 
\[
(a \cdot b)\cdot m = a \cdot (b \cdot m), \; \; \; 1 \cdot m = m, \; \; \; 0 \cdot m = 0_M, \; \; \; a \cdot 0_M = 0_M, \; \; \forall a,b, \in \A, \; m \in M
\]
\end{dfn}
A \emph{morphism} of $\A$-modules is given by a pointed map $f: M \rightarrow N$ compatible with the action of $\A$, i.e. $f(a \cdot m) = a \cdot f(m)$. 
The $\A$-module $M$ is said to be \emph{finite} if $M$ is a finite set, in which case we define its \emph{dimension} to be $dim(M) = \vert M \vert -1$ (we do not count the basepoint, since it is the analogue of $0$). We say that $N \subset M$ is an \emph{$\A$--submodule} if it is a (necessarily pointed) subset of $M$ preserved by the action of $\A$. $\A$ always posses the module $\mathbb{O} := \{0\}$, which will be referred to as the \emph{zero module}, as well as the \emph{trivial module} $\mathbb{1} :=  \fun$, on which all non-zero elements of $\A$ act by the identity (this arises via the augmentation homomorphism $\A \ra \fun$ sending all non-zero elements to $1$). 
\medskip

\noindent {\bf Note:} This structure is called an \emph{$\A$-act} in \cite{KKM} and an \emph{$\A$-set} in \cite{CLS}. 

\medskip

We denote by $\Amod$ the category of finite $\A$-modules. It is the $\fun$ analogue of the category of a finite-dimensional representations of an algebra. Note that for $M \in \Amod$, $\on{End}_{\Amod}(M) := \on{Hom}_{\Amod}(M,M)$ is a monoid (in general non-commutative). An $\fun$-module is simply a pointed set, and will be referred to as a vector space over $\fun$. Thus, an $\A$-module structure on $M \in \funmod$ amounts to a monoid homomorphism $A \rightarrow \on{End}_{\funmod}(M)$. 

\medskip

Given a morphism $f: M \rightarrow N$ in $\Amod$, we define the \emph{image} of $f$ to be $$Im(f) := \{ n \in N \vert \exists m \in M, f(m) = n \}.$$ For $M \in \Amod$ and an $\A$--submodule $N \subset M$, the \emph{quotient} of $M$ by $N$, denoted $M/N$ is the $\A$-module $$ M/N :=  M \backslash N \cup \{0 \}, $$ i.e. the pointed set obtained by identifying all elements of $N$ with the base-point, equipped with the induced $\A$--action. 

\bigskip

We recall some properties of $\Amod$,  following \cite{KKM, CLS, SzSgp}, where we refer the reader for details:

\medskip

\begin{enumerate}
\item For $M,N \in \Amod$, $\vert Hom_{\Amod}(M,N) \vert < \infty$ \label{part1}
\item The trivial $\A$-module $0$ is an initial, terminal, and hence zero object of $\Amod$. 
\item Every morphism $f: M \rightarrow N$ in $C_A$ has a kernel $Ker(f):=f^{-1}(0_N)$.
\item  Every morphism $f: M \rightarrow N$ in $C_A$ has a cokernel $Coker(f):=M/Im(f)$. 
\item The co-product of a finite collection $\{ M_i \}, i \in I$ in $\Amod$ exists, and is given by the wedge product
$$
\bigvee_{i \in I} M_i = \coprod M_i / \sim
$$
where $\sim$ is the equivalence relation identifying the basepoints. We will denote the co-product of $\{M_i \}$ by $$\oplus_{i \in I} M_i$$
\item The product of a finite collection $\{ M_i \}, i \in I$ in $\Amod$ exists, and is given by the Cartesian product $\prod M_i$, equipped with the diagonal $\A$--action. It is clearly associative. It is however not compatible with the coproduct in the sense that $M \times (N \oplus L) \nsimeq M \times N \oplus M \times L$.
\item The category $\Amod$ possesses a reduced version $M \wedge N$ of the Cartesian product $M \times N$, called the smash product. $M \wedge N := M \times N / M \vee N$, where $M$ and $N$ are identified with the $\A$--submodules $\{ (m,0_N) \}$ and $\{(0_M,n)\}$ of $M \times N$ respectively. The smash product inherits the associativity from the Cartesian product, and is compatible with the co-product - i.e. $$M \wedge (N \oplus L) \simeq M \wedge N \oplus M \wedge L.$$ It defines a symmetric monoidal structure on $\Amod$, with unit $\fun$ (i.e. $M \wedge \fun \simeq M$).  
\item $\Amod$ possesses small limits and co-limits. 
\item Given $M$ in $\Amod$ and $N \subset M$, there is an inclusion-preserving correspondence between flags $N \subset L \subset M$ in $\Amod$ and $\A$--submodules of $M/N$ given by sending $L$ to $L/N$. The inverse correspondence is given by sending $K \subset M/N$ to $\pi^{-1} (K)$, where $\pi: M \rightarrow M/N$ is the canonical projection. This correspondence has the property that if $N \subset L \subset L' \subset M$, then $(L'/N)/(L/N) \simeq L'/L$.  \label{property9}
\end{enumerate}

\medskip

\noindent These properties suggest that $\Amod$ has many of the properties of an abelian category, without being additive. It is an example of a \emph{quasi-exact} and \emph{belian} category in the sense of Deitmar \cite{D3} and a \emph{proto-abelian} category in the sense of Dyckerhoff-Kapranov \cite{DK}. Let $\on{Iso}(\Amod)$ denote the set of isomorphism classes in $\Amod$, and by $[M]$ the isomorphism class of $M \in \Amod$.

We will regard $\Amod$ as a symmetric monoidal category with respect to $\wedge$ and unit $\fun$. 

\begin{dfn}

\begin{enumerate}
\item We say that $M \in \Amod$ is \emph{indecomposable} if it cannot be written as $M = N \oplus L$ for non-zero $N, L \in \Amod$. 
\item We say $M \in \Amod$ is \emph{irreducible} or \emph{simple} if it contains no proper sub-modules (i.e those different from $0$ and $M$). 
\end{enumerate}
\end{dfn}

It is clear that every irreducible module is indecomposable. 
We have the following analogue of the Krull-Schmidt theorem (\cite{SzSgp}): 

\begin{prop} \label{Krull_Schmidt}
Every $M \in \Amod$ can be uniquely decomposed (up to reordering) as a direct sum of indecomposable $\A$-modules. 
\end{prop}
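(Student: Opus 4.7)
The plan is to handle existence by a straightforward induction on $\dim(M)$, and uniqueness by showing that the indecomposable summands coincide with an intrinsically defined notion of ``connected component'' of $M$ under the $\A$-action, so that every decomposition is forced to recover the same pieces.

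For existence, I would induct on $\dim(M) = |M| - 1$. The cases $\dim(M) = 0, 1$ are immediate, since $\mathbb{O}$ and $\mathbb{1}$ are indecomposable. In general, if $M$ is indecomposable we are done; otherwise $M = N \oplus L$ with $N, L$ nonzero, hence of strictly smaller dimension, and the inductive hypothesis applies to each summand.

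The real content is uniqueness, and the idea is to extract the decomposition canonically from $M$. On the set $M \setminus \{0_M\}$ I would introduce the equivalence relation $\sim$ generated by the relation $m \, R \, (a \cdot m)$ whenever $a \in \A$ and $a \cdot m \neq 0_M$. For each $\sim$-equivalence class $C$, the pointed subset $C \cup \{0_M\} \subset M$ is an $\A$-submodule: for $m \in C$ and $a \in \A$, either $a \cdot m = 0_M$ or $a \cdot m \in C$ by construction. These submodules partition the non-basepoint elements of $M$, so $M$ is visibly the wedge sum of them, exhibiting a decomposition whose pieces are determined solely by $M$ as an $\A$-module.

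I would then close the argument with two checks. First, each ``component'' submodule $C \cup \{0_M\}$ is indecomposable: a further splitting $C \cup \{0_M\} = N \oplus L$ would partition $C$ into two nonempty subsets with no $R$-edge between them, contradicting that $C$ is a single equivalence class. Second, given any decomposition $M = \bigoplus_{i \in I} M_i$ into indecomposables, I would use the fact (immediate from the wedge sum description) that the $\A$-action on $M$ preserves each summand: for $m \in M_i \setminus \{0\}$, either $a \cdot m \in M_i$ or $a \cdot m = 0_M$. Consequently each $M_i \setminus \{0\}$ is a union of $\sim$-equivalence classes, and by the indecomposability of $M_i$ combined with the first check it must be a single class. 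Thus any two decompositions into indecomposables match, up to reordering, the canonical decomposition by components.

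The main thing to be careful about is the observation that on a wedge sum $N \vee L$ the $\A$-action never carries a nonzero element of $N$ to a nonzero element of $L$, since the basepoints are identified; once this is in hand, the correspondence between summands and $\sim$-classes is forced, and no endomorphism-ring-of-indecomposables machinery (as in the classical Krull--Schmidt proof) is needed.
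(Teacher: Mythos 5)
Your proof is correct. The paper itself cites \cite{SzSgp} for this proposition and does not reproduce an argument, so a line-by-line comparison is not possible, but the route you take is the natural and elementary one for this pointed-set setting. The key structural fact you isolate at the end—that in a wedge sum $N \vee L$ the $\A$-action cannot carry a nonzero element of $N$ to a nonzero element of $L$, since $N$ and $L$ are submodules meeting only in the basepoint—is exactly what forces any decomposition of $M$ into indecomposables to coincide with the canonical decomposition into equivalence classes of the relation generated by $m \sim a \cdot m$ (for $a \cdot m \neq 0_M$). That recovers uniqueness intrinsically, with no appeal to local endomorphism rings as in the classical Krull--Schmidt argument; this is precisely what makes the statement much softer over $\fun$ than over a ring. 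The steps you flag as needing verification (each $\sim$-class plus basepoint is a submodule; each $M_i \setminus \{0\}$ is saturated under $\sim$; each $\sim$-class is indecomposable) all check out. One cosmetic point: in the base case of the existence induction, calling $\mathbb{O}$ "indecomposable" is a degenerate terminological choice—one usually regards $\mathbb{O}$ as the empty direct sum—though under the paper's literal definition it is vacuously indecomposable, and nothing in the substance of the argument depends on it.
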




\begin{rmk} \label{subobj}

Suppose $M = \oplus^k_{i=1} M_i$ is the decomposition of an $\A$-module into indecomposables, and $N \subset M$ is a submodule. It then immediately follows that $N = \oplus (N \cap M_i)$. 

\end{rmk}




\subsection{Monoid algebras}

In this section, we recall a few facts regarding monoid algebras following \cite{Steinberg}. Let $k$ be a field. The monoid algebra $k[\A]$ consists of linear combinations of non-zero elements of $\A$ with coefficients in $k$. I.e.
\[
k[\A] = \{ \sum c_a a \vert a \in \A, a \neq 0, c_a \in k \}
\]
with product induced from the product in $\A$, extended $k$--linearly. $k[\A]$ is a bialgebra, with co-product
\[
\Delta: k[\A] \ra k[\A] \otimes k[\A]
\]
determined by
\[
\Delta(a) = a \otimes a, \; a \in \A
\]
The category $\on{Mod}_{k[\A]}$ of $k[\A]$-modules is therefore symmetric monoidal under the operation of tensoring over $k$. 

There is a base-change functor: 
\begin{equation}
\otimes_{\fun} k : \Amod \rightarrow \Mod_{k[\A]}
\end{equation}
to the category of $k[\A]$--modules defined by setting
\[
M \otimes_{\fun} k := \bigoplus_{m \in M, m \neq 0_M} k \cdot m 
\]
i.e. the free $k$--module on the non-zero elements of $M$, with the $k[\A]$-action induced from the $\A$--action on $M$. It sends $f \in \on{Hom}_{\A}(M,N)$ to its unique $k$--linear extension in $\on{Hom}_{k[\A]} (M \otimes_{\fun} k, N \otimes_{\fun} k)$. 

We will find the following elementary observation useful:

\begin{prop} \label{monoidal_base_change}
The functor $\otimes_{\fun} k:  \Amod \rightarrow \Mod_{k[\A]} $ is monoidal.
\end{prop}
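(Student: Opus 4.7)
The plan is to construct an explicit natural isomorphism $(M \wedge N) \otimes_{\fun} k \xrightarrow{\sim} (M \otimes_{\fun} k) \otimes_k (N \otimes_{\fun} k)$ of $k[\A]$-modules, verify the unit isomorphism $\fun \otimes_{\fun} k \simeq k$, and then observe that the required coherence axioms follow formally from the fact that both monoidal structures are built via the same underlying cartesian product on pointed sets.

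First I would unpack the definitions. On the source side, the non-zero elements of $M \wedge N = (M \times N)/(M \vee N)$ are in bijection with pairs $(m,n)$ where $m \in M \setminus \{0_M\}$ and $n \in N \setminus \{0_N\}$; call this set $M^\circ \times N^\circ$. Hence $(M \wedge N) \otimes_{\fun} k$ is the free $k$-module on $M^\circ \times N^\circ$. On the target side, $(M \otimes_{\fun} k) \otimes_k (N \otimes_{\fun} k)$ is the $k$-module with basis $\{m \otimes n : m \in M^\circ, n \in N^\circ\}$. I would then define
\[
\Psi_{M,N} : (M \wedge N) \otimes_{\fun} k \longrightarrow (M \otimes_{\fun} k) \otimes_k (N \otimes_{\fun} k)
\]
on basis elements by $(m,n) \mapsto m \otimes n$ and extend $k$-linearly. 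This is manifestly a $k$-linear bijection, and naturality in $M$ and $N$ is clear, since a pair of $\A$-module morphisms $M \to M'$, $N \to N'$ acts on both sides by the same formula on basis elements.

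The main verification is $k[\A]$-equivariance of $\Psi_{M,N}$. On $M \wedge N$ the action is diagonal, $a \cdot (m,n) = (a\cdot m, a\cdot n)$, interpreted as zero in $M \wedge N$ whenever $a \cdot m = 0_M$ or $a \cdot n = 0_N$. On the target, the $k[\A]$-module structure is induced by the coproduct $\Delta(a) = a \otimes a$, so $a \cdot (m \otimes n) = (a \cdot m) \otimes (a \cdot n)$, which vanishes in $\otimes_k$ exactly when either tensor factor is zero. Thus the two actions agree on basis elements, so $\Psi_{M,N}$ is a $k[\A]$-module isomorphism. For the unit, $\fun = \{0,1\}$ has a single non-zero element, so $\fun \otimes_{\fun} k = k$; the $\A$-action on $\fun$ factors through the augmentation $\A \to \fun$ sending non-zero elements to $1$, so every non-zero $a$ acts as the identity on $k$, which is precisely the unit $k[\A]$-module structure on $k$ induced by the counit of the bialgebra.

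Finally, the coherence constraints (associator, left/right unitor, and symmetry) on the source are inherited from the cartesian product on pointed sets, and on the target from the cartesian product on $k$-bases, so under $\Psi$ they correspond to the same underlying rearrangement of tuples. Concretely, $\Psi_{M \wedge N, L} \circ ((M \wedge N) \wedge L \otimes_{\fun} k \to \cdots)$ and $(\Psi_{M,N} \otimes \mathrm{id}) \circ \Psi_{M \wedge N, L}$ agree on a basis element $((m,n),\ell)$ since both produce $m \otimes n \otimes \ell$, and similarly for the unit and symmetry diagrams. I expect the only nontrivial part to be the action compatibility above; everything else is a bookkeeping exercise on basis elements, since the base change functor literally freely linearizes the underlying pointed-set operations.
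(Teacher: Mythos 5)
Your proof is correct. The paper states this Proposition without proof, calling it an ``elementary observation''; your argument supplies exactly the standard details — identifying the nonzero elements of $M \wedge N$ with the set of pairs of nonzero elements of $M$ and $N$, checking $k[\A]$-equivariance via the bialgebra coproduct $\Delta(a) = a \otimes a$, matching the units, and noting that the coherence diagrams reduce to rearrangement of tuples on basis elements.
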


As a consequence, we have that for $M, N \in \Amod$, $$(M \wedge N) \otimes_{\fun} k \simeq (M\otimes_{\fun} k) \otimes_k (N \otimes_{\fun} k)$$ as $k[\A]$-modules. 

\subsection{The split Grothendieck ring}

\begin{dfn} \label{Ksplit_def}
The \emph{split Grothendieck ring} of $\Amod$, denoted $\Ksplit(\Amod)$ is the $\ZZ$--linear span of isomorphism classes in $\Amod$ modulo the relation $[M \oplus N] = [M] + [N]$. I.e.
\[
\Ksplit(\Amod) = \ZZ \left[ [M] \right]/I  \;\;\; [M] \in \on{Iso}(\Amod)
\]
where $I$ is the ideal generated by all differences $[M\oplus N] - [M] - [N]$, with product induced by $\wedge$. Since by Prop \ref{Krull_Schmidt} every module is a direct sum of indecomposable ones, we can also describe $\Ksplit{\Amod}$ as the $\ZZ$-linear span of indecomposable $\A$-modules: 
\begin{equation} \label{lin_comb}
\Ksplit(\Amod) := \{ \sum a_i [M_i] \vert a_i \in \ZZ, [M_i] \in \on{Iso}(\Amod), M_i \textrm{ is indecomposable } \}
\end{equation}
with the product of two isomorphism classes $[M], [M']$ of indecomposables given by
\[
[M] \cdot [M'] = \sum [N_i] \, \textrm{ if } M \wedge M' \simeq \oplus N_i, \; \; N_i \textrm{ indecomposable } 
\]
\end{dfn}

We note that $\Ksplit(\Amod)$ is a commutative ring with identity the isomorphism class $[\fun]$ of the trivial $\A$-module. 

More generally, if $\C$ is a subcategory of $\Amod$ closed under $\oplus$ and $\wedge$, we may consider $\Ksplit(\C)$, where the span in \ref{lin_comb} is restricted to the indecomposable modules in $\C$. 

The following is an immediate consequence of the of the functor $\otimes_{\fun} k$ being monoidal:

\begin{prop}
There is a ring homormorphism
\[
\bch: \Ksplit(\Amod) \ra \Ksplit(\Mod_{k[\A]})
\]
\end{prop}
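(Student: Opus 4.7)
The plan is to define $\bch$ on isomorphism class generators by $\bch([M]) := [M \otimes_{\fun} k]$, extend $\ZZ$-linearly, and then check that this map descends to the quotient defining $\Ksplit(\Amod)$ and is multiplicative. Since $\otimes_{\fun} k$ is a functor, isomorphic $\A$-modules are sent to isomorphic $k[\A]$-modules, so $\bch$ is well-defined on $\on{Iso}(\Amod)$, and then by $\ZZ$-linear extension on the free abelian group $\ZZ[\on{Iso}(\Amod)]$.

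First I would verify that $\bch$ respects the relation $[M \oplus N] \sim [M] + [N]$. By the construction of $\otimes_{\fun} k$, $M \otimes_{\fun} k$ is the free $k$-module on the non-basepoint elements of $M$; since $M \oplus N = M \vee N$ identifies only the basepoints, the non-basepoint elements of $M \oplus N$ are exactly the disjoint union of those of $M$ and of $N$. Hence $(M \oplus N) \otimes_{\fun} k \simeq (M \otimes_{\fun} k) \oplus (N \otimes_{\fun} k)$ as $k$-modules, and this isomorphism is visibly $k[\A]$-equivariant since the $\A$-action is inherited. Consequently $[M \oplus N] - [M] - [N]$ lies in the kernel, and $\bch$ descends to a well-defined group homomorphism $\Ksplit(\Amod) \to \Ksplit(\Mod_{k[\A]})$.

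Next I would check that $\bch$ is a ring homomorphism. Multiplicativity on generators is the statement that $[(M \wedge N) \otimes_{\fun} k] = [(M \otimes_{\fun} k) \otimes_k (N \otimes_{\fun} k)]$, which is exactly the monoidal isomorphism furnished by Proposition \ref{monoidal_base_change}. For the unit, note that $\fun \otimes_{\fun} k \simeq k$ with $\A$ acting through the augmentation $\A \to \fun$, i.e.\ the counit of the bialgebra $k[\A]$; this is the unit object of the monoidal category $(\Mod_{k[\A]}, \otimes_k)$, so $\bch([\fun]) = [k]$ is the multiplicative identity of $\Ksplit(\Mod_{k[\A]})$.

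There is no real obstacle here: the only point requiring a moment of care is the compatibility of the wedge sum with base change, but this is immediate once one unpacks the definition of $\otimes_{\fun} k$ as the free $k$-module on non-zero elements. The substance of the statement is already packaged into Proposition \ref{monoidal_base_change}, so the proof amounts to assembling these observations.
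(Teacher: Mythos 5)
Your proof is correct and follows essentially the same approach the paper implies: the paper dismisses this as an immediate consequence of Proposition \ref{monoidal_base_change}, and you simply unpack the three things that need checking (additivity across $\oplus$, multiplicativity across $\wedge$ via monoidality, and preservation of the unit). Your extra detail on additivity and the unit is entirely in the spirit of what the paper leaves to the reader.
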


\section{Rooted trees, wheels, and the monoid $\mt$}
In this section we study the ring $\Ksplit(\Amod)$ in the case where $\A=\mt$, the free monoid on one generator,  and the corresponding base-change homomorphism 
\[
\bch: \Ksplit(\Amod) \ra \Ksplit(\Mod_{k[t]})
\]
for a field $k$. Recall that finite-dimensional $k[t]$-modules correspond to pairs $(V, T)$ where $V$ is a finite-dimensional vector space over $k$, and $T \in \on{End}(V)$. The indecomposable $k[t]$-modules thus correspond to Jordan blocks. It follows by analogy that the study of finite $\mt$-modules amounts to studying "linear algebra over $\fun$", and the indecomposable $\mt$-modules are the corresponding Jordan blocks over $\fun$. 

Given $M \in \mtmod$, we may associate to it a graph $\Gamma_M$ which encodes the action of $t$ on $M$. The vertices of $\Gamma_M$ correspond bijectively to the non-zero elements of $M$, and the directed edges join $m \in M$ to $t \cdot m$. We will make no distinction between $m \in M$ and the corresponding vertex of $\Gamma_M$ when the context is clear. 

The possible connected graphs arising as $\Gamma_M$, corresponding to indecomposable $\mt$-modules, were classified in \cite{SzSgp} and are easily seen to be of two types:

\begin{minipage}{.5\textwidth}
\begin{center}
\begin{tikzpicture}
\draw [ultra thick,->] (0,0) -- (0.9,0.9);
\draw [fill] (0,0) circle [radius=0.1];
\draw [fill] (1,1) circle [radius=0.1];
\draw [ultra thick,->] (2,0) -- (1.1,0.9);
\draw [fill] (2,0) circle [radius=0.1];
\draw [ultra thick,->] (1,1) -- (1.9,1.9);
\draw [fill] (2,2) circle [radius=0.1];
\draw [fill] (4,0) circle [radius=0.1];
\draw [fill] (3,1) circle [radius=0.1];
\draw [ultra thick,->] (4,0) -- (3.1, 0.9);
\draw [ultra thick,->] (3,1) -- (2.1,1.9);
\draw [fill] (2,1) circle [radius=0.1];
\draw [ultra thick,->] (2,1) -- (2,1.9);
\draw [fill] (2,3) circle [radius=0.1];
\draw [ultra thick,->] (2,2) -- (2,2.9);
\draw [fill] (3,2) circle [radius=0.1];
\draw [ultra thick,->] (3,2) -- (2.1,2.9);
\draw [ultra thick,->] (2,3) -- (2,3.9);
\draw [fill] (2,4) circle [radius=0.1];
\node at (2,-1) {Rooted tree};
\end{tikzpicture}
\end{center}
\end{minipage}
\begin{minipage}{.5\textwidth}
\begin{center}
\begin{tikzpicture}
\draw [ultra thick,->] (0,0) -- (0.9,0);
\draw [ultra thick,->] (1,0) -- (1,0.9);
\draw [ultra thick,->] (1,1) -- (0.1,1);
\draw [ultra thick,->] (0,1) -- (0,0.1);
\draw [ultra thick,->] (0,-1) -- (0,-0.1);
\draw [ultra thick,->] (-1,-1) -- (-0.1,-0.1);
\draw [ultra thick,->] (2,2) -- (1.1,1.1);
\draw [ultra thick,->] (2,3) -- (2,2.1);
\draw [ultra thick,->] (3,2) -- (2.1,2);
\draw [fill] (0,0) circle [radius=0.1];
\draw [fill] (1,1) circle [radius=0.1];
\draw [fill] (1,0) circle [radius=0.1];
\draw [fill] (0,1) circle [radius=0.1];
\draw [fill] (-1,-1) circle [radius=0.1];
\draw [fill] (0,-1) circle [radius=0.1];
\draw [fill] (2,2) circle [radius=0.1];
\draw [fill] (2,3) circle [radius=0.1];
\draw [fill] (3,2) circle [radius=0.1];
\node at (1,-2) {Wheel};
\end{tikzpicture}
\end{center}
\end{minipage}

We call the first type a \emph{rooted tree} and the second a \emph{wheel}. Rooted trees correspond to indecomposable $\mt$-modules where $t$ acts nilpotently, in the sense that $t^n \cdot m = 0$ for sufficently large $n$. We call such a module \emph{nilpotent}. 

We will use the following terminology when discussing the graphs $\Gamma_M$
\begin{itemize}
\item We call a vertex with no outgoing edges a \emph{root}. It is drawn at the top. A connected $\Gamma_M$ can have at most one root. 
\item If $M$ is nilpotent, hence $\Gamma_M$ a tree, then the \emph{depth} of a vertex $m \neq 0$, denoted $depth(m)$ is the number of edges in the unique path connecting $m$ to the root. The only vertex of depth zero is the root. In general, $depth(m)+1$ is the smallest power of $t$ that annihilates $m$.
\item The \emph{height} of a rooted tree is the maximal depth of any of its vertices. The tree in the above example has height $4$.
\item A \emph{cycle of length} $n$ is a sequence of distinct elements $Z = \{ m_1, \cdots, m_n \}, m_i \in M$, such that $t \cdot m_i = m_{i+1}$ and $t \cdot m_n = m_1$. 
\item A \emph{chain of length} $n$ is a sequence of distinct elements $C = \{ m_1, m_2, \cdots, m_n \}, m_i \in M$, such that $t\cdot m_i = m_{i+1}, 1 \geq i < n$, but $t \cdot m_n \neq m_1$.
\end{itemize}

Wheels contain a single directed cycle, possibly with trees attached. A wheel is easily seen to arise from a $\mt$-module $M$ where $t^r \cdot m = t^{r+n} \cdot m$ for some $r, n \in \mathbb{N}$ for every $m \in M$. 
 
We begin with the problem of computing the product in $\Ksplit(\mtmod)$ in terms of the graphs above.

\subsection{Products in  $\Ksplit(\mtmod)$  }  \label{mt}

Given a $\mt$-module $M$, and $m \in M$, we define 
\[
\on{pred}(m) = \{ m' \in M, t \cdot m' = m \}
\]
At the level of the graph $\Gamma_M$, $\on{pred}(m), m \neq 0$ corresponds to the vertices connected to $m$ via directed edge. Recall that for $M, N \in \mtmod$ and $(m,n) \in M \wedge N$, $t \cdot (m,n) = (t\cdot m, t \cdot n)$. In particular, $t \cdot (m,n) = 0$ iff $t \cdot m = 0$ or $t \cdot n = 0$.  The following observations are immediate:

\begin{prop} \label{sp}
Let $M, N \in \mtmod$ be indecomposable. 
\begin{enumerate}
\item $M \wedge N$ is nilpotent iff at least one of $M, N$ is nilpotent.
\item If $M, N$ are nilpotent, and $(m,n) \in M \wedge N$, then $depth((m,n))= min(depth(m), depth(n))$.
\item If $M$ is nilpotent, and $N$ is not, then for $(m,n) \in M \wedge N$, $depth((m,n)) = depth(m)$. 
\item $\on{pred}(0) \subset M = \on{ker}(t)$, and corresponds to a root in the corresponding component of $\Gamma_M$. 
\item For $(m,n) \in M \wedge N$,
 \begin{equation*}
  \on{pred} (m,n) = \{ (m', n') \vert m' \in \on{pred}(m), n' \in \on{pred}(n) \}.
  \end{equation*}
  I.e. $\on{pred}(m,n) = \on{pred}(m) \times \on{pred}(n)$.
\item $\{ \on{pred}(0) \subset M \wedge N \}   = \lbrace \{ \on{pred}(0) \subset M \} \times N \rbrace \cup  \lbrace M \times  \{ \on{pred}(0) \subset N \} \rbrace.$ 
\end{enumerate}
\end{prop}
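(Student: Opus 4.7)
The plan is to verify each of the six parts directly, unwinding the definition of the smash product action $t \cdot (m,n) = (tm, tn)$ together with the basepoint identification $(m,n) = 0$ in $M \wedge N$ iff $m = 0$ or $n = 0$. Each part is essentially a short computation, and the only care needed is to keep track of which tuples lie in $M \wedge N$ versus which collapse to the basepoint.

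For part (1), I would use that since $M, N$ are indecomposable, ``not nilpotent'' means the corresponding graph is a wheel (by the earlier classification). If $M$ is nilpotent with $t^k M = 0_M$, then $t^k(m,n) = (t^k m, t^k n) = (0_M, t^k n) = 0$ in $M \wedge N$, proving one direction. Conversely, if neither is nilpotent, pick $m$ and $n$ on the respective cycles of $\Gamma_M$ and $\Gamma_N$; then $t^k m \neq 0_M$ and $t^k n \neq 0_N$ for all $k$, hence $t^k(m,n) \neq 0$ in $M \wedge N$. For parts (2) and (3), the key identity is that $t^{k+1}(m,n) = 0$ iff $t^{k+1} m = 0_M$ or $t^{k+1} n = 0_N$. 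Recalling that $\depth(x) + 1$ is the least power of $t$ annihilating $x$, in case (2) this gives $\depth((m,n)) = \min(\depth(m), \depth(n))$, and in case (3), since $N$ is a wheel and $n \neq 0_N$, the orbit $\{t^k n\}$ never reaches $0_N$, so only the $M$-coordinate contributes and the depth equals $\depth(m)$.

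Parts (4), (5), (6) are direct set-theoretic identities. For (4), by definition $\on{pred}(0_M) = \{m \in M : t \cdot m = 0_M\} = \ker(t)$, and in $\Gamma_M$ these are precisely vertices with no outgoing edge inside the graph (the edge to $0_M$ is not drawn), i.e.\ the roots of the component. For (5), if $(m',n') \in \on{pred}(m,n)$ with $(m,n) \neq 0$, then $tm' = m \neq 0_M$ and $tn' = n \neq 0_N$, forcing $m', n'$ both nonzero, so $(m',n')$ is a genuine element of $M \wedge N$; conversely any $(m',n') \in \on{pred}(m) \times \on{pred}(n)$ is nonzero in $M \wedge N$ and satisfies $t(m',n') = (m,n)$. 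Part (6) follows the same template: $(m,n) \in \on{pred}(0) \subset M \wedge N$ iff $tm = 0_M$ or $tn = 0_N$, which is exactly the union on the right-hand side (viewed inside $M \wedge N$ after collapsing the wedge).

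There is no real obstacle here; the statement is a packaging of bookkeeping about the action of $t$ on a smash product, and the hardest step is simply being precise about the distinction between elements of $M \times N$ and their images in $M \wedge N$ when asserting equalities of subsets in parts (5) and (6). I would present the proof as a short enumerated list mirroring the statement, with each item occupying at most a line or two.
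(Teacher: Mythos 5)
Your proposal is correct, and it matches the paper's (unstated) approach: the paper labels these claims as ``immediate observations'' and gives no proof, and your direct unwinding of the smash-product action $t\cdot(m,n)=(tm,tn)$ together with the basepoint collapse is precisely the verification the authors consider too routine to write out.
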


We proceed to examine the three cases where each of $\Gamma_M, \Gamma_N$ is a rooted tree/wheel. 

\begin{itemize}
\item If $\Gamma_M, \Gamma_N$ are both rooted trees, then $\Gamma_{M \wedge N}$ consists of $dim(M) + dim(N) - 1$ rooted trees whose roots correspond to pairs $(m,n) \in M \wedge N$ where at least one of $m, n$ is a root. Each component has height $\leq min(height(\Gamma_M), height(\Gamma_N))$, and at least one component where the inequality is sharp. 
\item If $\Gamma_M$ is a tree and $\Gamma_N$ is a wheel, then $\Gamma_{M \wedge N}$ consists of $dim(N)$ rooted trees whose roots correspond to pairs $(r_M, n)$ where $r_M$ is the root of $\Gamma_M$.  Each component has height $\leq height(\Gamma_M)$. 
\item If $\Gamma_M, \Gamma_N$ are both wheels containing cycles of length $l_M, l_N$, then $ker(t)=0$ in both $M$ and $N$, and so $ker(t)=0$ on $M \wedge N$. Each connected component of $\Gamma_{M \wedge N}$ is therefore a wheel, and contains a unique cycle. If $(m,n) \in M \wedge N$ is part of a cycle, then 
\begin{equation} \label{product_cycle}
t^r \cdot (m,n) = (m,n)
\end{equation}
for some $r$, which implies that $t^r \cdot m = m$ and $t^r \cdot n = n$. It follows that $m$ (resp. $n$) is itself part of a cycle in $\Gamma_M$ (resp. $\Gamma_N$). Moreover, $r$ must be a multiple of $l_M$ and $l_N$. Since the length of the cycle containing $(m,n)$ is the least $r$ such that equation \ref{product_cycle} holds, it follows that $r=\lcm(l_M, l_N)$.

To summarize, have thus shown that each connected component of $\Gamma_{M \wedge N}$ contains a (necessarily unique) cycle of length $\lcm(l_M, l_N)$, and that $(m,n)$ occurs in a cycle iff $m, n$ do as well. Since there are $l_M l_N$ such pairs, it follows that $\Gamma_{M \wedge N}$ has $\frac{l_M l_N}{\lcm(l_M, l_N)} = \gcd(l_M, l_N)$ connected components. 
 \end{itemize}

We note that each connected component of $\Gamma_{M \wedge N}$ is determined recursively by property $(5)$ above. For instance, if at least one of $\Gamma_M, \Gamma_N$ is a rooted tree, we may begin with a vertex $(r_M, n)$ or $(m, r_N)$ corresponding to a root in $\Gamma_{M \wedge N}$ and build the rest of the component using $(5)$. The same approach works if both graphs are wheels, though there is no preferred choice for the starting vertex. 

\begin{eg}
The two trees $\Gamma_N$ and $\Gamma_M$ yield the forest $\Gamma_{N \wedge M}$ pictured below, with $6$ connected components, each of which has height $\leq 1$. 

\begin{minipage}{.5\textwidth}
\begin{center}
\begin{tikzpicture}
\draw [fill] (-1,0) circle [radius=0.1];
\draw [fill] (1,0) circle [radius=0.1];
\draw [fill] (0,1) circle [radius=0.1];
\draw [fill] (0,2) circle [radius=0.1];

\draw [ultra thick,->] (-1,0) -- (-0.1,0.9);
\draw [ultra thick,->] (1,0) -- (0.1,0.9);
\draw [ultra thick,->] (0,1) -- (0,1.9);

\node at (0,-1) {$\Gamma_N$};
\end{tikzpicture}
\end{center}
\end{minipage}
\begin{minipage}{.5\textwidth}
\begin{center}
\begin{tikzpicture}
\draw [fill] (-1,0) circle [radius=0.1];
\draw [fill] (1,0) circle [radius=0.1];
\draw [fill] (0,1) circle [radius=0.1];

\draw [ultra thick,->] (-1,0) -- (-0.1,0.9);
\draw [ultra thick,->] (1,0) -- (0.1,0.9);

\node at (0,-1) {$\Gamma_M$};
\end{tikzpicture}
\end{center}
\end{minipage}

\begin{center}
\begin{tikzpicture}
\draw [fill] (-5,0.5) circle [radius=0.1];
\draw [fill] (-4,0.5) circle [radius=0.1];
\draw [fill] (-3,0) circle [radius=0.1];
\draw [fill] (-1,0) circle [radius=0.1];
\draw [fill] (0,0) circle [radius=0.1];
\draw [fill] (1,0) circle [radius=0.1];
\draw [fill] (3,0) circle [radius=0.1];
\draw [fill] (4,0) circle [radius=0.1];
\draw [fill] (5,0.5) circle [radius=0.1];
\draw [fill] (6,0.5) circle [radius=0.1];

\draw [fill] (-2,1) circle [radius=0.1];
\draw [fill] (2,1) circle [radius=0.1];

\draw [ultra thick,->] (-3,0) -- (-2.1,0.9);
\draw [ultra thick,->] (-1,0) -- (-1.9,0.9);

\draw [ultra thick,->] (0,0) -- (1.7,1);
\draw [ultra thick,->] (1,0) -- (1.9,0.9);
\draw [ultra thick,->] (3,0) -- (2.1,0.9);
\draw [ultra thick,->] (4,0) -- (2.3,1);

\node at (0.5,-1) {$\Gamma_{N \wedge M}$};
\end{tikzpicture}
\end{center}
\end{eg}

\begin{eg}
The tree $\Gamma_N$ and the wheel $\Gamma_M$ yield the forest $\Gamma_{N \wedge M}$ pictured below, with $3$ connected components, each of which has height $\leq 2$. 

\begin{minipage}{.5\textwidth}
\begin{center}
\begin{tikzpicture}
\draw [fill] (-1,0) circle [radius=0.1];
\draw [fill] (1,0) circle [radius=0.1];
\draw [fill] (0,1) circle [radius=0.1];
\draw [fill] (0,2) circle [radius=0.1];

\draw [ultra thick,->] (-1,0) -- (-0.1,0.9);
\draw [ultra thick,->] (1,0) -- (0.1,0.9);
\draw [ultra thick,->] (0,1) -- (0,1.9);

\node at (0,-1) {$\Gamma_N$};
\end{tikzpicture}
\end{center}
\end{minipage}
\begin{minipage}{.5\textwidth}
\begin{center}
\begin{tikzpicture}
\draw [fill] (-1,0) circle [radius=0.1];
\draw [fill] (0,0) circle [radius=0.1];
\draw [fill] (1,0) circle [radius=0.1];

\draw [ultra thick,->] (-1,-0.1) -- (-.1,-0.1);
\draw [ultra thick,->] (0,0.1) -- (-0.9,0.1);
\draw [ultra thick,->] (1,0) -- (0.1,0);

\node at (0,-1) {$\Gamma_M$};
\end{tikzpicture}
\end{center}
\end{minipage}

\begin{center}
\begin{tikzpicture}
\draw [fill] (-3,0) circle [radius=0.1];
\draw [fill] (-2,0) circle [radius=0.1];
\draw [fill] (0,0) circle [radius=0.1];
\draw [fill] (1,0) circle [radius=0.1];
\draw [fill] (3,0) circle [radius=0.1];
\draw [fill] (5,0) circle [radius=0.1];

\draw [fill] (-1,1) circle [radius=0.1];
\draw [fill] (2,1) circle [radius=0.1];
\draw [fill] (4,1) circle [radius=0.1];
\draw [fill] (6,1) circle [radius=0.1];

\draw [fill] (-1,2) circle [radius=0.1];
\draw [fill] (3,2) circle [radius=0.1];

\draw [ultra thick,->] (-3,0) -- (-1.3,1);
\draw [ultra thick,->] (-2,0) -- (-1.1,0.9);
\draw [ultra thick,->] (0,0) -- (-0.9,0.9);
\draw [ultra thick,->] (1,0) -- (-0.7,1);
\draw [ultra thick,->] (3,0) -- (3.9,0.9);
\draw [ultra thick,->] (5,0) -- (4.1,0.9);

\draw [ultra thick,->] (-1,1) -- (-1,1.9);
\draw [ultra thick,->] (2,1) -- (2.9,1.9);
\draw [ultra thick,->] (4,1) -- (3.1,1.9);

\node at (1.5,-1) {$\Gamma_{N \wedge M}$};
\end{tikzpicture}
\end{center}
\end{eg}

\begin{eg}
The two wheels $\Gamma_N$ and $\Gamma_M$ yield $\Gamma_{N \wedge M}$ pictured below, with $\gcd(2,2)=2$ wheels, each with a cycle of $\lcm(2,2)=2$ vertices.

\begin{minipage}{.5\textwidth}
\begin{center}
\begin{tikzpicture}
\draw [fill] (-1,0) circle [radius=0.1];
\draw [fill] (0,0) circle [radius=0.1];
\draw [fill] (1,0) circle [radius=0.1];
\draw [fill] (0,1) circle [radius=0.1];

\draw [ultra thick,->] (-1,-0.1) -- (-.1,-0.1);
\draw [ultra thick,->] (0,0.1) -- (-0.9,0.1);
\draw [ultra thick,->] (1,0) -- (0.1,0);
\draw [ultra thick,->] (0,1) -- (0,0.1);

\node at (0,-1) {$\Gamma_N$};
\end{tikzpicture}
\end{center}
\end{minipage}
\begin{minipage}{.5\textwidth}
\begin{center}
\begin{tikzpicture}
\draw [fill] (-1,0) circle [radius=0.1];
\draw [fill] (0,0) circle [radius=0.1];
\draw [fill] (1,0) circle [radius=0.1];

\draw [ultra thick,->] (-1,-0.1) -- (-.1,-0.1);
\draw [ultra thick,->] (0,0.1) -- (-0.9,0.1);
\draw [ultra thick,->] (1,0) -- (0.1,0);

\node at (0,-1) {$\Gamma_M$};
\end{tikzpicture}
\end{center}
\end{minipage}

\begin{center}
\begin{tikzpicture}
\draw [fill] (-1,0) circle [radius=0.1];
\draw [fill] (0,0) circle [radius=0.1];
\draw [fill] (1,0) circle [radius=0.1];
\draw [fill] (0,1) circle [radius=0.1];
\draw [fill] (1,1) circle [radius=0.1];
\draw [fill] (0,-1) circle [radius=0.1];
\draw [fill] (1,-1) circle [radius=0.1];

\draw [ultra thick,->] (-1,-0.1) -- (-.1,-0.1);
\draw [ultra thick,->] (0,0.1) -- (-0.9,0.1);
\draw [ultra thick,->] (1,0) -- (0.1,0);
\draw [ultra thick,->] (0,1) -- (0,0.1);
\draw [ultra thick,->] (1,1) -- (0.2,0.2);
\draw [ultra thick,->] (0,-1) -- (0,-0.1);
\draw [ultra thick,->] (1,-1) -- (0.2,-0.2);

\draw [fill] (3,0) circle [radius=0.1];
\draw [fill] (4,0) circle [radius=0.1];
\draw [fill] (5,0) circle [radius=0.1];
\draw [fill] (6,0) circle [radius=0.1];
\draw [fill] (5,1) circle [radius=0.1];

\draw [ultra thick,->] (3,0) -- (3.9,0);
\draw [ultra thick,->] (4,-0.1) -- (4.9,-0.1);
\draw [ultra thick,->] (5,0.1) -- (4.1,0.1);
\draw [ultra thick,->] (6,0) -- (5.1,0);
\draw [ultra thick,->] (5,1) -- (5,0.1);

\node at (2.5,-2) {$\Gamma_{N \wedge M}$};
\end{tikzpicture}
\end{center}
\end{eg}

\begin{eg}
The two wheels $\Gamma_N$ and $\Gamma_M$ yield $\Gamma_{N \wedge M}$ pictured below, which consists of a single wheel as $\gcd(3,2)=1$. This wheel contains a cycle of $\lcm(3,2)=6$ vertices.

\begin{minipage}{.5\textwidth}
\begin{center}
\begin{tikzpicture}
\draw [fill] (-1,0) circle [radius=0.1];
\draw [fill] (0,0) circle [radius=0.1];
\draw [fill] (1,0) circle [radius=0.1];
\draw [fill] (-0.5,0.86) circle [radius=0.1];

\draw [ultra thick,->] (1,0) -- (0.1,0);
\draw [ultra thick,->] (-1,0) -- (-0.1,0);
\draw [ultra thick,->] (0,0) -- (-.45,.78);
\draw [ultra thick,->] (-0.5,0.86) -- (-0.95,0.086);

\node at (0,-1) {$\Gamma_N$};
\end{tikzpicture}
\end{center}
\end{minipage}
\begin{minipage}{.5\textwidth}
\begin{center}
\begin{tikzpicture}
\draw [fill] (-1,0) circle [radius=0.1];
\draw [fill] (0,0) circle [radius=0.1];
\draw [fill] (1,0) circle [radius=0.1];

\draw [ultra thick,->] (-1,-0.1) -- (-.1,-0.1);
\draw [ultra thick,->] (0,0.1) -- (-0.9,0.1);
\draw [ultra thick,->] (1,0) -- (0.1,0);

\node at (0,-1) {$\Gamma_M$};
\end{tikzpicture}
\end{center}
\end{minipage}

\begin{center}
\begin{tikzpicture}
\draw [fill] (0,0) circle [radius=0.1];
\draw [fill] (1,0) circle [radius=0.1];

\draw [fill] (-2,1) circle [radius=0.1];
\draw [fill] (0,1) circle [radius=0.1];
\draw [fill] (1,1) circle [radius=0.1];

\draw [fill] (-1,2) circle [radius=0.1];
\draw [fill] (2,2) circle [radius=0.1];

\draw [fill] (0,3) circle [radius=0.1];
\draw [fill] (1,3) circle [radius=0.1];
\draw [fill] (2,3) circle [radius=0.1];

\draw [fill] (1,4) circle [radius=0.1];
\draw [fill] (2,4) circle [radius=0.1];

\draw [ultra thick,->] (0,0) -- (0,0.9);
\draw [ultra thick,->] (1,0) -- (1,0.9);

\draw [ultra thick,->] (-2,1) -- (-1.1,1.9);
\draw [ultra thick,->] (0,1) -- (0.9,1);
\draw [ultra thick,->] (1,1) -- (1.9,1.9);

\draw [ultra thick,->] (-1,2) -- (-0.1,1.1);
\draw [ultra thick,->] (2,2) -- (1.2,2.8);

\draw [ultra thick,->] (0,3) -- (-0.9,2.1);
\draw [ultra thick,->] (1,3) -- (0.1,3);
\draw [ultra thick,->] (2,3) -- (1.1,3);

\draw [ultra thick,->] (1,4) -- (1,3.1);
\draw [ultra thick,->] (2,4) -- (1.2,3.2);

\node at (0.5,-1) {$\Gamma_{N \wedge M}$};

\end{tikzpicture}
\end{center}
\end{eg}

We end this section by collecting a couple of observations regarding the structure of $\Ksplit(\mtmod)$.
\begin{enumerate}
\item  $\Ksplit(\mtmod)$ is a $\ZZ_{\geq 0}$-graded commutative ring, with $deg([M]) = dim(M)$ for $[M] \in \on{Iso}(\mtmod)$. 
\item
\[
\mc{N} := \{ \sum_i a_i [M_i] \vert M_i \textrm{ is nilpotent } \} \subset \Ksplit(\mtmod) 
\]
is a graded ideal.The quotient 
\[
 \Ksplit(\mtmod)/ \mc{N}
\]
can be naturally identified with the integral span of wheels, with product given by $\wedge$. 
\end{enumerate}

\subsection{The homomorphism $\bch: \Ksplit(\Amod) \ra \Ksplit(\Mod_{k[t]})$} \label{bch}

In this subsection we study the ring homomorphism $\bch: \Ksplit(\Amod) \ra \Ksplit(\Mod_{k[t]})$ where $k$ is an field containing all roots of unity. For $[M] \in \on{Iso}(\mtmod)$, 
$\bch([M])$ is the isomorphism class of the $k[t]$-module $M \otimes_{\fun} k$ with basis $m \in M, m \neq 0$, and $t$-action extended $k$-linearly from $M$. In what follows, we will denote $M \otimes_{\fun} k$ by $M_k$ and the linear transformation $t \in \on{End}(M_k)$ by $T_M$. Fixing an ordering $m_1, \cdots, m_{dim(M)}$ of the non-zero elements of $M$ produces a basis for $M_k$, and the matrix of $T_M$ in this basis is the adjacency matrix $\Adj(\Gamma_M)$ of $\Gamma_M$. 

The isomorphism classes of indecomposable $k[t]$-modules correspond to $n \times n$ Jordan blocks $J_n (\lambda)$ with eigenvalue $\lambda$:
\begin{center} 
\begin{tikzpicture} 
\matrix (m) [matrix of math nodes,nodes in empty cells,right delimiter={]},left delimiter={[} ]{
\lambda&	1&	&	0\\
&	&	&	\\
&	&	&	1\\
0&	&	&	\lambda \\
};
\draw[loosely dotted] (m-1-1)-- (m-4-4);
\draw[loosely dotted] (m-1-2)-- (m-3-4);
\draw[loosely dotted] (m-1-1)-- (m-4-1);
\draw[loosely dotted] (m-1-2)-- (m-1-4);
\draw[loosely dotted] (m-1-1)-- (m-4-4);
\draw[loosely dotted] (m-1-4)-- (m-3-4);
\draw[loosely dotted] (m-4-1)-- (m-4-4);
\end{tikzpicture}
\end{center}

Describing $\bch$ thus amounts to decomposing $(M_k, T_M)$, or equivalently the adjacency matrix $\Adj(\Gamma_M)$, into Jordan blocks. It is clearly sufficient to consider the case where $\Gamma_M$ is connected.

 \begin{minipage}{.5\textwidth}
\begin{center}
\begin{tikzpicture}
\draw [ultra thick,->] (2,0) -- (2.0,0.9);
\draw [fill] (2,0) circle [radius=0.1];
\draw [fill] (2,2) circle [radius=0.1];
\draw [fill] (2,1) circle [radius=0.1];
\draw [ultra thick,->] (2,1) -- (2,1.9);
\draw [fill] (2,3) circle [radius=0.1];
\draw [ultra thick,->] (2,2) -- (2,2.9);
\draw [ultra thick,->] (2,3) -- (2,3.9);
\draw [fill] (2,4) circle [radius=0.1];
\node at (2,-1) {Ladder};
\end{tikzpicture}
\end{center}
\end{minipage}
\begin{minipage}{.5\textwidth}
\begin{center}
\begin{tikzpicture}
\draw [ultra thick,->] (0,0) -- (0.9,0);
\draw [ultra thick,->] (1,0) -- (1,0.9);
\draw [ultra thick,->] (1,1) -- (0.1,1);
\draw [ultra thick,->] (0,1) -- (0,0.1);
\draw [fill] (0,0) circle [radius=0.1];
\draw [fill] (1,1) circle [radius=0.1];
\draw [fill] (1,0) circle [radius=0.1];
\draw [fill] (0,1) circle [radius=0.1];
\node at (0.5,-2) {Simple cycle};
\end{tikzpicture}
\end{center}
\end{minipage}

The Jordan forms of $\Adj(\Gamma_M)$ when $M$ is a ladder tree of height $n-1$ or a simple cycle of length $n$ are easily seen to be the matrices $J_n(0)$ and $D_n$:

 \begin{minipage}{.5\textwidth}
\begin{center}
\begin{tikzpicture}
\matrix (m) [matrix of math nodes,nodes in empty cells,right delimiter={]},left delimiter={[} ]{
0&	1&	&	0\\
&	&	&	\\
&	&	&	1\\
0&	&	&	0\\
};
\draw[loosely dotted] (m-1-1)-- (m-4-4);
\draw[loosely dotted] (m-1-2)-- (m-3-4);
\draw[loosely dotted] (m-1-1)-- (m-4-1);
\draw[loosely dotted] (m-1-2)-- (m-1-4);
\draw[loosely dotted] (m-1-1)-- (m-4-4);
\draw[loosely dotted] (m-1-4)-- (m-3-4);
\draw[loosely dotted] (m-4-1)-- (m-4-4);
\node at (-2.2,0) {$J_n (0) =$};
\end{tikzpicture}
\end{center}
\end{minipage}
 \begin{minipage}{.5\textwidth}
 \begin{center}
\begin{tikzpicture})]
\matrix (m) [matrix of math nodes,nodes in empty cells,right delimiter={]},left delimiter={[} ]{
\zeta&	&	0\\
&	&	\\
0&	&	\zeta^{n}\\
};
\draw[loosely dotted] (m-1-1)-- (m-3-3);
\node at (-2.2,0) {$D_n = $};
\end{tikzpicture}
\end{center}
\end{minipage}
with $\zeta=e^{\frac{2\pi i}{n}}$

For more general directed graphs arising as $\Gamma_M$, this problem is solved in \cite{CT}. We proceed to recall the solution given there, specialized to our setup. 

\begin{dfn}
 A \emph{partition} of $\Gamma_M$ is a collection $\{C_1, \cdots, C_r, Z_1, \cdots, Z_s \}$ of disjoint chains $C_1, \cdots, C_r$ and cycles $Z_1, \cdots, Z_s$ whose union is $M \backslash 0$. A \emph{proper partition} of $M$ is a partition satisfying the following two additional properties:
 \begin{enumerate}
 \item Each cycle in $M$ is equal to one of $Z_1, \cdots, Z_s$.
 \item For each $1 \leq i \leq r$, if $\Gamma^{i}_M$ is the graph obtained from $\Gamma_M$ by deleting all of the vertices in $Z_1, \cdots Z_s, C_1, \cdots C_i$, then $C_{i+1}$ is a chain of maximal length in $\Gamma^{i}_M$. 
 \end{enumerate}
\end{dfn}

It is easy to see that proper partitions of $\Gamma_M$ exist, and can be obtained as follows. Each connected component of $\Gamma_M$ has at most one (and necessarily unique) cycle - take these to be $Z_1, \cdots, Z_s$, Upon deleting the $Z_j, \; 1 \leq j \leq s$, we are left with a forest of rooted trees. We now look for the longest chain $C_1$ in this forest, delete it, and repeat, obtaining $C_2, \cdots C_r$. 

\begin{eg} In the graph $\Gamma_M$ below, 
\begin{center}
\begin{tikzpicture}
\draw [ultra thick,->] (0,0) -- (0.9,0);
\draw [ultra thick,->] (1,0) -- (1,0.9);
\draw [ultra thick,->] (1,1) -- (0.1,1);
\draw [ultra thick,->] (0,1) -- (0,0.1);
\draw [ultra thick,->] (-3,0) -- (-2.1,0);
\draw [ultra thick,->] (-2,0) -- (-1.1,0);
\draw [ultra thick,->] (-1,0) -- (-0.1,0);
\draw [ultra thick,->] (2,2) -- (1.1,1.1);
\draw [ultra thick,->] (2,3) -- (2,2.1);
\draw [ultra thick,->] (3,2) -- (2.1,2);
\draw [fill] (0,0) circle [radius=0.1];
\draw [fill] (1,1) circle [radius=0.1];
\draw [fill] (1,0) circle [radius=0.1];
\draw [fill] (0,1) circle [radius=0.1];
\draw [fill] (-1,0) circle [radius=0.1];
\draw [fill] (-2,0) circle [radius=0.1];
\draw [fill] (-3,0) circle [radius=0.1];
\draw [fill] (2,2) circle [radius=0.1];
\draw [fill] (2,3) circle [radius=0.1];
\draw [fill] (3,2) circle [radius=0.1];
\node at (-3,-0.5) {1};
\node at (-2,-0.5) {2};
\node at (-1,-0.5) {3};
\node at (0,-0.5) {4};
\node at (1,-0.5) {5};
\node at (1.3,1) {6};
\node at (-0.3,1) {7};
\node at (2,1.7) {8};
\node at (3.3,2) {9};
\node at (2,3.3) {10};
\end{tikzpicture}
\end{center}
a proper partition is given by $\{C_1, C_2, C_3, Z_1 \}$, where $C_1 = \{ 1, 2, 3 \}$, $C_2 = \{ 9, 8 \}$, $C_3=\{ 10 \}$, and $Z_1 = \{ 4, 5, 6, 7 \}$.
\end{eg}

The following theorem describes the Jordan form of $\Adj(\Gamma_M)$.

\begin{theorem}[\cite{CT}]
Let $\{C_1, \cdots, C_r, Z_1, \cdots, Z_s \}$ be a proper partition of $\Gamma_M$ into chains $C_i$ of length $l(C_i)$ and cycles $Z_j$ of length $l(Z_j)$. Then
\[
\Adj(\Gamma_M) \simeq \bigoplus^r_{i=1} J_{l(C_i)}(0) \oplus \bigoplus^s_{j=1} D_{n} 
\] 
\end{theorem}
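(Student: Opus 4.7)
The plan is to construct an explicit Jordan basis of $M_k$ realizing the claimed decomposition. First, I would reduce to the case where $\Gamma_M$ is connected, since $\Adj(\Gamma_M)$ is block diagonal with respect to connected components of $\Gamma_M$ and $M_k$ splits accordingly as a $k[t]$-module. A connected $\Gamma_M$ is then either a rooted tree (so $T_M$ is nilpotent) or a wheel with a single cycle $Z$ of length $l(Z)$ together with trees hanging off $Z$. In the wheel case, the subspace spanned by the cycle vertices is $T_M$-invariant and $T_M$ restricted to it is the cyclic shift; since $k$ contains all roots of unity, this shift is diagonalizable with eigenvalues $1, \zeta, \ldots, \zeta^{l(Z)-1}$ for $\zeta$ a primitive $l(Z)$-th root of unity, contributing the $D_{l(Z)}$ summand.

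The main construction is a nilpotent Jordan basis for the tree-vertex part, built one chain at a time. For each chain $C_i = \{m^{(i)}_1, \ldots, m^{(i)}_{l_i}\}$ of the proper partition, I would define vectors
\[
w^{(i)}_j = m^{(i)}_j - \epsilon^{(i)}_j, \qquad j = 1, \ldots, l_i,
\]
with corrections $\epsilon^{(i)}_j$ chosen so that $T_M w^{(i)}_j = w^{(i)}_{j+1}$ for $j < l_i$ and $T_M w^{(i)}_{l_i} = 0$. Because $C_i$ is a maximal chain in the graph remaining after removing $Z_1, \ldots, Z_s$ and $C_1, \ldots, C_{i-1}$, the vertex $T_M m^{(i)}_{l_i}$ must lie in $\{0\} \cup Z_1 \cup \cdots \cup Z_s \cup C_1 \cup \cdots \cup C_{i-1}$. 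In the three respective cases I would take $\epsilon^{(i)}_j = 0$, $\epsilon^{(i)}_j = z_{q-(l_i-j+1)\bmod l(Z)}$ (if $T_M m^{(i)}_{l_i} = z_q$ on a cycle), or $\epsilon^{(i)}_j = m^{(k)}_{p - l_i + j - 1}$ (if $T_M m^{(i)}_{l_i} = m^{(k)}_p$ for some $k < i$); a direct computation then verifies the Jordan relations.

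The key technical point is the chain-into-chain case, where I need the index $p - l_i + j - 1$ to lie in $\{1, \ldots, l_k\}$. This follows from the maximality property defining a proper partition: when $C_k$ was selected, the concatenation $\{m^{(i)}_1, \ldots, m^{(i)}_{l_i}, m^{(k)}_p, \ldots, m^{(k)}_{l_k}\}$ was a valid chain of length $l_i + l_k - p + 1$ in the graph at that stage, so maximality of $C_k$ forces $l_k \geq l_i + l_k - p + 1$, i.e.\ $p \geq l_i + 1$, making all correction indices valid. Finally, ordering the new basis as (cycle eigenvectors, then $w^{(i)}_j$ in order of increasing $i$), the change-of-basis matrix from the original vertex basis becomes block upper triangular with the DFT matrix on the cycle block and a unipotent lower-triangular block on the tree block, hence invertible. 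This produces the decomposition $\bigoplus_i J_{l(C_i)}(0) \oplus D_{l(Z)}$ on each connected component, and summing over components yields the theorem. I expect the main obstacle to be the index-juggling in the maximality argument and checking that the correction formula remains consistent when several later chains point into the same earlier chain via $T_M$.
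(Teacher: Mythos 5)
The paper does not prove this theorem: it is quoted from Cardon--Tuckfield \cite{CT} and the authors explicitly state they are recalling their result. Your construction is correct, and as far as I can tell it follows essentially the same route as \cite{CT}: reduce to a connected $\Gamma_M$, diagonalize the cyclic shift on the cycle vertices (using the roots of unity in $k$) to produce $D_{l(Z)}$, and build nilpotent Jordan strings $w^{(i)}_j = m^{(i)}_j - \epsilon^{(i)}_j$ one chain at a time, with the maximality clause in the definition of proper partition forcing $p \ge l_i + 1$ whenever $T_M m^{(i)}_{l_i} = m^{(k)}_p$ for $k<i$, so all correction indices stay in range.

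A few points worth tightening in the write-up. First, you should state explicitly why $T_M m^{(i)}_{l_i}$ must land in $\{0\}\cup Z_1\cup\cdots\cup Z_s\cup C_1\cup\cdots\cup C_{i-1}$: it cannot land inside $C_i$ itself (that would create a directed cycle in the forest remaining after the $Z_j$ are removed), and if it landed anywhere else in $\Gamma^{i-1}_M$ you could extend $C_i$, contradicting maximality. Second, the phrase ``block upper triangular with \dots\ a unipotent lower-triangular block on the tree block'' is internally inconsistent; with the ordering (cycle vertices, then $C_1, C_2, \dots$), the change-of-basis matrix is block \emph{lower} triangular, with the DFT matrix as the cycle diagonal block and unitriangular diagonal blocks for the chains, hence invertible. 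Third, the concern you flag about several later chains feeding into the same earlier chain is not an actual obstruction: each $\epsilon^{(i)}_j$ is determined solely by $C_i$ and the single target vertex $T_M m^{(i)}_{l_i}$, so no consistency condition across different $i$ arises. Finally, note the statement in the paper has a typo: the summand should be $D_{l(Z_j)}$, not $D_n$.
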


We are now able to characterize the image of the homorphism $\bch$:

\begin{theorem}
The image of $\bch$ is the subring of $\Ksplit(\mtmod)$ generated by $[J_n(0)], [D_n]$, $n \geq 1$. 
\end{theorem}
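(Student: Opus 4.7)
The plan is to verify two containments between $\on{Im}(\bch)$ and the subring $R \subseteq \Ksplit(\Mod_{k[t]})$ generated by $\{[J_n(0)], [D_n]\}_{n \geq 1}$.

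For the containment $R \subseteq \on{Im}(\bch)$, I would exhibit preimages of each generator. The ladder tree of height $n-1$ is $\Gamma_M$ for an indecomposable nilpotent $\mt$-module $M$ whose adjacency matrix is exactly $J_n(0)$, so $\bch([M]) = [J_n(0)]$. Similarly the simple cycle of length $n$ is $\Gamma_N$ for an indecomposable $\mt$-module $N$ with $\Adj(\Gamma_N) \simeq D_n$ (after diagonalization using the $n$th roots of unity available in $k$), so $\bch([N]) = [D_n]$. Because $\on{Im}(\bch)$ is a subring, it then contains the entire subring these classes generate.

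For the reverse containment $\on{Im}(\bch) \subseteq R$, I would take an arbitrary element of $\Ksplit(\mtmod)$ and, using Proposition \ref{Krull_Schmidt}, write it as $\sum a_i [M_i]$ with each $M_i$ indecomposable. Then $\bch\bigl(\sum a_i [M_i]\bigr) = \sum a_i [(M_i)_k]$, where the class $[(M_i)_k] \in \Ksplit(\Mod_{k[t]})$ is determined by the Jordan form of $\Adj(\Gamma_{M_i})$ under the identification of indecomposable $k[t]$-modules with Jordan blocks. Since $M_i$ is indecomposable, $\Gamma_{M_i}$ is connected, and the theorem of \cite{CT} recalled above gives a proper partition $\{C_1,\dots,C_r,Z_1,\dots,Z_s\}$ with
\[
\Adj(\Gamma_{M_i}) \simeq \bigoplus_{l=1}^{r} J_{l(C_l)}(0) \;\oplus\; \bigoplus_{m=1}^{s} D_{l(Z_m)}.
\]
Applying the defining relation $[M \oplus N] = [M] + [N]$ of the split Grothendieck ring yields $[(M_i)_k] = \sum_l [J_{l(C_l)}(0)] + \sum_m [D_{l(Z_m)}]$, which sits in the $\ZZ$-linear span of $\{[J_n(0)], [D_n]\}$ and hence in $R$.

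Combining the two containments gives $\on{Im}(\bch) = R$. There is no substantive obstacle here: the essential input is the Jordan-form description from \cite{CT}, and the remainder is bookkeeping with the relations of $\Ksplit$ together with the elementary observation that ladders and simple cycles realize the generators on the nose. In fact this argument shows slightly more, namely that $\on{Im}(\bch)$ coincides with the $\ZZ$-linear span of $\{[J_n(0)], [D_n]\}_{n \geq 1}$, which must therefore already be closed under multiplication.
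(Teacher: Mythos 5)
Your proof is correct and fills in precisely the argument the paper leaves implicit: the theorem is stated immediately after the Cardon--Tuckfield result, which is the whole engine, and you supply the two-containment bookkeeping. (The statement in the paper has a typo --- it should read ``subring of $\Ksplit(\Mod_{k[t]})$'', as you correctly interpret.) Your closing observation that $\on{Im}(\bch)$ is in fact the $\ZZ$-linear span of $\{[J_n(0)],[D_n]\}$, which is therefore automatically multiplicatively closed, is also right; this can be seen directly from the Kronecker product rules (a Kronecker product of nilpotent Jordan blocks decomposes into nilpotent blocks, $D_m \otimes D_n$ is again diagonal with root-of-unity entries, etc.), but your derivation via the image of a ring homomorphism is cleaner.
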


We note one final consequence of the fact that $\bch$ is monoidal. By the above discussion, $\bch(M)$ may be identified with the adjacency matrix of $\Gamma_M$. It follows that 
\[
\bch(M \wedge N) = \bch(M) \otimes_k \bch(N) 
\]
In other words, $\Adj(\Gamma_{M \wedge N}) = \Adj(\Gamma_M) \otimes \Adj (\Gamma_N)$, where $\otimes$ on the right denotes the Kronecker product of matrices. This is the defining property of the \emph{tensor product graph} $\Gamma_M \otimes \Gamma_N$ (see \cite{Weich}).  To summarize,

\begin{prop}
For $M, N \in \mtmod$, $\Gamma_{M \wedge N} = \Gamma_M \otimes \Gamma_N$. 
\end{prop}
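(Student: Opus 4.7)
The plan is to observe that both graphs have the same vertex set with matching edge relations, and to package this identification through the monoidality of base change, as hinted in the paragraph preceding the statement.

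First I would unpack the vertex sets. The nonzero elements of $M \wedge N = (M \times N)/(M \vee N)$ are exactly the pairs $(m,n)$ with $m \neq 0_M$ and $n \neq 0_N$, so the vertex set of $\Gamma_{M \wedge N}$ is $(M \setminus 0_M) \times (N \setminus 0_N)$. This is, by definition, the vertex set of the tensor product graph $\Gamma_M \otimes \Gamma_N$.

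Next I would compare edges. In $\Gamma_{M \wedge N}$, there is a directed edge $(m,n) \to (m',n')$ iff $(m',n') = t \cdot (m,n) = (tm, tn)$ and $(m', n') \neq 0$ in $M \wedge N$, i.e.\ iff $tm = m' \neq 0_M$ and $tn = n' \neq 0_N$. This is precisely the condition for an edge $m \to m'$ in $\Gamma_M$ and an edge $n \to n'$ in $\Gamma_N$ to both exist, which is the definition of the edge relation in $\Gamma_M \otimes \Gamma_N$. Hence the two graphs coincide as directed graphs.

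To obtain the same conclusion at the level of adjacency matrices (which is what the preceding paragraph emphasizes), I would invoke Proposition~\ref{monoidal_base_change}: since $\otimes_\fun k$ is monoidal and the coproduct on $k[t]$ satisfies $\Delta(t) = t \otimes t$, the action of $t$ on $(M \wedge N) \otimes_\fun k \simeq (M \otimes_\fun k) \otimes_k (N \otimes_\fun k)$ is $T_M \otimes T_N$, whose matrix in the product basis is the Kronecker product $\Adj(\Gamma_M) \otimes \Adj(\Gamma_N)$. Since the matrix of $t$ in the canonical basis of $(M \wedge N) \otimes_\fun k$ is $\Adj(\Gamma_{M \wedge N})$, we conclude $\Adj(\Gamma_{M \wedge N}) = \Adj(\Gamma_M) \otimes \Adj(\Gamma_N)$, and this Kronecker identity is the defining property of $\Gamma_M \otimes \Gamma_N$.

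There is no real obstacle here: the statement is essentially a reformulation of monoidality of $\otimes_\fun k$ combined with the standard fact that the Kronecker product of adjacency matrices yields the tensor product graph. The only point requiring minor care is to match the basis orderings consistently so that the matrix-level identity and the combinatorial identification of vertices and edges agree, but this is routine once the smash product is unwound.
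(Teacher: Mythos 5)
Your proposal is correct, and its second paragraph is exactly the paper's reasoning: the proposition is stated as a summary of the preceding discussion, which derives $\Adj(\Gamma_{M \wedge N}) = \Adj(\Gamma_M) \otimes \Adj(\Gamma_N)$ from the monoidality of $\bch$ and then identifies this Kronecker identity as the defining property of the tensor product graph. Your first paragraph adds a direct combinatorial verification (matching vertex sets and edge relations) that the paper does not spell out; this is more elementary and self-contained, since it makes no reference to a base field $k$ or to $\Mod_{k[t]}$, and it establishes the equality of directed graphs directly rather than only at the level of adjacency matrices. Either route suffices, and your write-up correctly notes that they agree once basis orderings are matched.
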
 

\section{Skew shapes and the monoids  $\mxn$}

In this section we consider a subcategory $\Sk_n \subset \mxnmod$ (originally introduced in \cite{SzSk}) consisting of $n$-dimensional skew shapes. Our goal is to give an explicit description of the product in the ring $\Ksplit(\Sk_n)$. 

\subsection{Skew shapes and $\Pn$-modules} \label{skew}

$\mathbb{Z}^n$ has a natural partial order where for $$ x=(x_1, \cdots, x_n) \in \mathbb{Z}^n  \textrm{ and } y = (y_1, \cdots, y_n) \in \mathbb{Z}^n, $$ $$ x \leq y \iff x_i \leq y_i \textrm{ for } i=1, \cdots, n. $$

\begin{dfn}
An \emph{n-dimensional skew shape} is a finite convex sub-poset  $S \subset \mathbb{Z}^n$. $S$ is \emph{connected} iff the corresponding poset is. 
We consider two skew shapes $S, S'$ to be equivalent iff they are isomorphic as posets. If $S, S'$ are connected, then they are equivalent iff $S'$ is a translation of $S$, i.e. if there exists $a \in \mathbb{Z}^n$ such that $S'=a+S$. 
\end{dfn}

The condition that $S$ is connected is easily seen to be equivalent to the condition that any two elements of $S$ can be connected via a lattice path lying in $S$. The name \emph{skew shape} is motivated by the fact that for $n=2$, a connected skew shape in the above sense corresponds (non-uniquely) to a difference $\lambda \backslash \mu$ of two Young diagrams in French notation. 

\begin{eg} \label{example_a}
Let $n=2$, and $$S \subset \mathbb{Z}^2 = \{ (1,0),(2,0),(3,0),(0,1),(1,1), (0,2) \}$$ (up to translation by $a \in \mathbb{Z}^2).$ Then $S$ corresponds to the connected skew Young diagram  

\begin{center}
\Ylinethick{1pt}
\gyoung(;,;;,:;;;)
\end{center}
\end{eg}

Let $S \subset \mathbb{Z}^n$ be a skew shape. We may attach to $S$ a $\Pn$-module $M_S$ with underlying set
\[
M_S = S \sqcup \{ 0 \},
\]
and action of $\Pn$ defined by
\begin{align*}
x^e \cdot s &= \begin{cases} s+ e, & \mbox{if } s+e \in S \\ 0 & \mbox{otherwise} \end{cases} \\
& e \in \mathbb{Z}^n_{\geq 0}, s \in {S}.
\end{align*}
In particular, $x_i \cdot s = s+e_i$ if $s+e_i \in S$, $0$ otherwise, where $e_i$ is the $i$th standard basis vector. 
$M_S$ is a graded $\PP_n$-module with respect to its $\mathbb{Z}^n_{\geq 0}$-grading, in which $deg(x_i)=e_i$ - the $i$th standard basis vector. 

\begin{eg} Let $S$ as in Example \ref{example_a}. $x_1$ (resp. $x_2$) act on the $\mathbb{P}_2 =  \langle x_1, x_2 \rangle$-module $M_S$ by moving one box to the right (resp. one box up) until reaching the edge of the diagram, and $0$ beyond that. A minimal set of generators for $M_S$ is indicated by the black dots:

\begin{center}
\Ylinethick{1pt}
\gyoung(;,\bullet;;,:;\bullet;;)
\end{center}
\end{eg}

We may consider the subcategory $\Sk_n \subset \Pn - \on{mod}$ consisting of $\Pn$-modules $M$ satisfying the following two conditions:
\begin{enumerate}
\item $M$ admits a $\mathbb{Z}^n$--grading.
\item For $a \in \Pn$, $m_1, m_2 \in M$,
\[
a \cdot m_1 = a \cdot m_2 \iff m_1 = m_2 \textrm{ OR } a \cdot m_1 = a \cdot m_2 =0 
\]
\end{enumerate}

The following proposition follows from results in \cite{SzSk}:

\begin{prop}
$\Sk_n$ forms a full monoidal subcategory of $\mxnmod$. If $M \in \Sk_n$ is indecomposable, then $M \simeq M_S$ for a connected skew shape $S$. 
\end{prop}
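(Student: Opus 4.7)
The statement has two components: first, that $\Sk_n$ is closed under $\wedge$ (fullness is automatic from the object-only definition of $\Sk_n$), and second, that every indecomposable object of $\Sk_n$ is isomorphic to $M_S$ for a connected skew shape $S$. The plan is to handle these separately, with conditions (1) and (2) doing most of the work in each case.

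For the monoidal part, given $M,N \in \Sk_n$ with $\mathbb{Z}^n$-gradings $\deg_M, \deg_N$, I would define a grading on $M \wedge N$ by $\deg(m \wedge n) := \deg_M(m)$. Since the $\mxn$-action on $M \wedge N$ is diagonal, the identity $\deg(x^r(m \wedge n)) = \deg_M(x^r m) = \deg_M(m) + r$ (whenever the left-hand side is nonzero) shows this respects the action, confirming condition (1). For condition (2), if $x^r(m_1 \wedge n_1) = x^r(m_2 \wedge n_2) \neq 0$ in $M \wedge N$, both components must be nonzero and pairwise equal, and condition (2) applied separately to $M$ and $N$ forces $m_1 = m_2$ and $n_1 = n_2$. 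The unit $\fun$ trivially satisfies (1) and (2), so $\Sk_n$ is monoidal with unit $\fun$.

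For the indecomposable case, I would extract $S$ directly from the grading. Fix a nonzero $m_0 \in M$ and translate the grading so that $\deg(m_0) = 0$; this produces a map $\phi \colon M \setminus \{0\} \to \mathbb{Z}^n$ given by $\phi(m) = \deg(m)$, and I set $S := \phi(M \setminus \{0\})$. Call two nonzero elements \emph{adjacent} if one equals $x^r$ times the other, and define \emph{connectivity} as the resulting transitive closure. The connectivity class of $m_0$, together with $0$, forms an $\mxn$-submodule whose set-theoretic complement (with $0$ adjoined) is also a submodule, so indecomposability of $M$ forces every nonzero element to be connected to $m_0$. The proof then rests on three claims: (a) $\phi$ is injective, (b) $S$ is convex in $\mathbb{Z}^n$, and (c) $S$ is connected as a sub-poset. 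Given these, the bijection $m \mapsto \phi(m)$ is easily seen to be an $\mxn$-module isomorphism $M \cong M_S$.

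The main obstacle lies in (a) and (b). For injectivity of $\phi$, one must preclude two distinct nonzero elements $m_1 \neq m_2$ from sharing a degree $v$: I would induct on the length of the shortest zigzag joining them (guaranteed finite by connectivity), using condition (2) — which makes each $x^r$ injective on its nonzero preimage — together with degree bookkeeping to collapse any peak or valley of the zigzag and shorten it, eventually forcing $m_1 = m_2$. Convexity of $S$ then follows from the elementary observation that $x^r = x^{r-r'} \circ x^{r'}$ forces $x^{r'} m \neq 0$ whenever $x^r m \neq 0$ and $0 \leq r' \leq r$, combined with (a) to identify intermediate elements uniquely; connectedness (c) of $S$ mirrors the connectivity of the support graph of $M$ already established. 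All of these arguments are carried out in \cite{SzSk}, from which the proposition is then immediate.
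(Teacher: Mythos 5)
The paper itself gives no proof here; it simply states that the proposition follows from results in \cite{SzSk}. Your proposal therefore supplies a sketch the paper omits. The monoidal-closure half is complete and correct: the grading $\deg(m\wedge n):=\deg_M(m)$ is compatible with the diagonal $\mxn$-action, condition~(2) descends to $M\wedge N$ componentwise, and $\fun$ trivially lies in $\Sk_n$ (one should also note $\Sk_n$ is closed under $\oplus$, which follows by the same kind of check). Your strategy for the indecomposable half --- read off $S=\phi(M\setminus\{0\})$ from the grading, then prove $\phi$ injective, its image convex, and the poset connected --- is the right decomposition of the problem.

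That said, the step you describe as ``degree bookkeeping to collapse any peak or valley'' is where all the difficulty sits, and as stated it has a gap. At a peak $p_i=x_{j_1}p_{i-1}=x_{j_2}p_{i+1}$ with $j_1\neq j_2$, condition~(2) does \emph{not} produce a common predecessor through which to reroute: in the skew shape $\{(0,1),(1,0),(1,1)\}$ the would-be commuting square at $(0,0)$ is simply absent from $M$, so the local collapse is unavailable. (At a valley one would want a pushout, which similarly need not exist.) Likewise, your convexity argument --- that $x^{r'}m\neq 0$ whenever $x^r m\neq 0$ and $0\le r'\le r$ --- only shows that if $m\preceq m'$ (i.e.\ $m'=x^r m$) then the intermediate degrees are hit; it does not by itself show that $\deg m\le\deg m'$ for two nonzero elements of the same component forces $m'=x^{\deg m'-\deg m}m$, which is what convexity of $\phi(M\setminus\{0\})$ actually requires, and which presupposes injectivity. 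Since you ultimately defer to \cite{SzSk} for these claims, the proposal is acceptable as an outline, but the shortening and factorization arguments as written would not close; the real work is in exploiting the global constraint that the zigzag's endpoints share a degree, rather than any local move.
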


In other words, given connected skew shapes $S_1, S_2$, the $\Pn$-module $M_{S_1} \wedge M_{S_2}$ is isomorphic to $\oplus M_{U_j}$, where $U_j$ are connected skew shapes. 


\begin{lemma} \label{skew_int}
If $S_1, S_2 \in \Sk_n$ with chosen embeddings in $\mathbb{Z}^n$, and $t \in \mathbb{Z}^n$, then 
\begin{equation*}
S_1 \cap (S_2 + t)
\end{equation*}
is also an $n$ dimensional skew shape, possibly empty.
\end{lemma}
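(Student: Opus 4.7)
The plan is to verify directly that $S_1 \cap (S_2 + t)$ satisfies the definition of an $n$-dimensional skew shape, namely that it is a finite convex sub-poset of $\mathbb{Z}^n$. Finiteness is immediate since $S_1$ itself is finite and $S_1 \cap (S_2+t) \subseteq S_1$. The allowance that the intersection may be empty accounts for the degenerate case, which I would note separately as trivially a skew shape (or simply excluded from the definition). So the entire content is convexity.

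First I would observe that translation by a fixed vector $t \in \mathbb{Z}^n$ is an order-isomorphism of $(\mathbb{Z}^n, \leq)$: if $a \leq b$ then $a + t \leq b + t$, since the partial order is coordinatewise. Consequently, if $S_2$ is convex in $\mathbb{Z}^n$, then so is $S_2 + t$: given $x \leq y \leq z$ with $x, z \in S_2 + t$, writing $x = x' + t$ and $z = z' + t$ with $x', z' \in S_2$, one has $x' \leq y - t \leq z'$, so convexity of $S_2$ gives $y - t \in S_2$, i.e. $y \in S_2 + t$.

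Next I would show that the intersection of two convex subsets of $\mathbb{Z}^n$ is convex. Suppose $a \leq b \leq c$ with $a, c \in S_1 \cap (S_2 + t)$. Then $a, c \in S_1$ forces $b \in S_1$ by convexity of $S_1$, and similarly $a, c \in S_2 + t$ forces $b \in S_2 + t$ by convexity of $S_2 + t$ established in the previous step. Hence $b \in S_1 \cap (S_2 + t)$, proving convexity.

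There is no real obstacle here; the statement is a direct combination of two elementary facts (order-isomorphism under translation, and closure of convexity under intersection). The only subtlety worth flagging is the convention about the empty set: I would state explicitly that the lemma permits $S_1 \cap (S_2+t) = \emptyset$, in which case there is nothing to check, so we may assume the intersection is nonempty when verifying convexity.
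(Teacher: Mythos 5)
Your proof is correct and follows essentially the same route as the paper's: first observe that $S_2 + t$ is again a skew shape (because translation is an order-isomorphism of $\mathbb{Z}^n$), then show the intersection of two convex finite sub-posets is convex and finite. You spell out the translation step in slightly more detail than the paper does, but the structure of the argument is identical.
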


\begin{proof} As $S_2$ is a skew shape, so is $S_2 + t$. Hence, it suffices to show the intersection of skew shapes is a skew shape, that is, $S_1 \cap S_2$ is a skew shape.

It is immediate that $S_1 \cap S_2$ is a finite poset of $\mathbb{Z}^n$. Further, if $a,b,c \in S_1 \cap S_2$ and $a \leq c \leq b$, then as both $S_1$ and $S_2$ are convex, $c \in S_1 \cap S_2$. Hence, $S_1 \cap S_2$ is convex and therefore a skew shape.
\end{proof}

\begin{theorem}
If $S_1, S_2 \in \Sk_n$ with chosen embeddings in $\ZZ^n$  then
\begin{equation*}
M_{S_1} \wedge M_{S_2} = \bigoplus_{t \in \mathbb{Z}^n} M_{S_1 \cap (S_2 + t)}
\end{equation*}
\end{theorem}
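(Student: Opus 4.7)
The plan is to exhibit an explicit isomorphism of $\Pn$-modules by sorting the nonzero elements of $M_{S_1}\wedge M_{S_2}$ according to a ``difference'' invariant preserved by the action.

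\textbf{Step 1: Describe the smash product as a pointed set.} By the definition of $\wedge$, the underlying pointed set of $M_{S_1}\wedge M_{S_2}$ is $(S_1\times S_2)\sqcup\{0\}$, with $(s_1,s_2)$ identified with $0$ as soon as either coordinate is $0_{M_{S_i}}$. For $r\in\mathbb{Z}^n_{\geq 0}$, the action reads
\[
x^r\cdot (s_1,s_2)=\begin{cases} (s_1+r,\,s_2+r) & \text{if } s_1+r\in S_1 \text{ and } s_2+r\in S_2,\\ 0 & \text{otherwise.}\end{cases}
\]

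\textbf{Step 2: The invariant.} Define $\phi:S_1\times S_2\to\mathbb{Z}^n$ by $\phi(s_1,s_2):=s_1-s_2$. If $x^r\cdot(s_1,s_2)\neq 0$ then $\phi(x^r\cdot(s_1,s_2))=\phi(s_1,s_2)$, so the action of $\Pn$ either preserves the value of $\phi$ or sends the element to $0$. Consequently, for each $t\in\mathbb{Z}^n$ the subset
\[
N_t:=\phi^{-1}(t)\sqcup\{0\}\subset M_{S_1}\wedge M_{S_2}
\]
is a $\Pn$-submodule, and $M_{S_1}\wedge M_{S_2}=\bigoplus_{t\in\mathbb{Z}^n}N_t$ as $\Pn$-modules (the sum is finite because $\phi$ has finite image).

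\textbf{Step 3: Identify $N_t$ with $M_{S_1\cap(S_2+t)}$.} An element $(s_1,s_2)$ satisfies $\phi(s_1,s_2)=t$ iff $s_2=s_1-t$ with $s_1\in S_1$ and $s_1-t\in S_2$, i.e.\ iff $s_1\in S_1\cap(S_2+t)$. By Lemma \ref{skew_int}, $S_1\cap(S_2+t)$ is a skew shape (possibly empty), so $M_{S_1\cap(S_2+t)}$ is defined. Consider the pointed map
\begin{align*}
\psi_t:M_{S_1\cap(S_2+t)}&\longrightarrow N_t,\\
s_1&\longmapsto (s_1,s_1-t),\qquad 0\mapsto 0.
\end{align*}
This is a bijection by the preceding identification. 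To see it is $\Pn$-equivariant, note that for $s_1\in S_1\cap(S_2+t)$ and $r\in\mathbb{Z}^n_{\geq 0}$, the element $x^r\cdot s_1$ computed in $M_{S_1\cap(S_2+t)}$ is $s_1+r$ exactly when $s_1+r\in S_1$ and $s_1+r-t\in S_2$, which is precisely the condition for $x^r\cdot(s_1,s_1-t)=(s_1+r,\,s_1+r-t)$ in $N_t$; otherwise both outputs are $0$.

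\textbf{Step 4: Conclude.} Combining Steps 2 and 3,
\[
M_{S_1}\wedge M_{S_2}=\bigoplus_{t\in\mathbb{Z}^n}N_t\;\simeq\;\bigoplus_{t\in\mathbb{Z}^n}M_{S_1\cap(S_2+t)},
\]
with only finitely many nonzero summands since $\phi$ takes only finitely many values on $S_1\times S_2$. The only delicate point, which is essentially book-keeping, is verifying Step 3 simultaneously as a bijection of pointed sets and as an $\Pn$-action; beyond that, the argument is the observation that translating the second factor by $t$ turns the diagonal action of $\wedge$ into the ordinary action on the intersection $S_1\cap(S_2+t)$.
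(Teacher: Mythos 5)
Your proof is correct and is essentially the paper's argument: the paper defines the map $\Psi((a,b)) = a_{a-b}$ all at once and checks it is a bijection and $\Pn$-equivariant, whereas you first observe that the ``difference'' $\phi(s_1,s_2)=s_1-s_2$ is a conserved quantity that splits $M_{S_1}\wedge M_{S_2}$ into the submodules $N_t$ and then identify each $N_t$ via the inverse map $\psi_t$. The organization into Steps 2--3 is a little more transparent about \emph{why} the decomposition exists, but the underlying isomorphism and the verification are the same.
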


\begin{rmk}
Since $S_1, S_2$ are finite embedded skew shapes, the intersection $S_1 \cap (S_2 + t)$ is empty for all but finitely many $t \in \ZZ^n$. Moreover, by Lemma \ref{skew_int}, the right hand side is an object in $\Sk_n$. 
\end{rmk}

\begin{proof} We will use the notation $a_t \in M_{S_1 \cap (S_2+t)}$ to denote an element occurring in the $t$-th summand in $\bigoplus_{t \in \mathbb{Z}^n} M_{S_1 \cap (S_2 + t)}$.
Define $$\Psi:  M_{S_1} \wedge M_{S_2} \ra \bigoplus_{t \in \mathbb{Z}^n} M_{S_1 \cap (S_2 + t)} $$
by 
$$  \Psi((a,b)) = a_{a-b} \in M_{S_1 \cap (S_2 + a-b)} $$
We proceed to show that $\Psi$ is an isomorphism of $\Pn$-modules. $\Psi$ is clearly injective, and sends $0$ to $0$. Moreover, if $a_{t} \in M_{S_1 \cap (S_2 + t)}$ is nonzero, then $a=b+t$ for some nonzero $b \in S_2$, hence $a_t=\Psi((a,b))$. $\Psi$ is therefore a bijection. 

It remains to check that $\Psi$ is morphism of $\Pn$-modules, or equivalently that $\Psi \circ x_i = x_i \circ \Psi$ for $i=1, \cdots, n$. 




Suppose $(a,b)$ is a non-zero element in the domain of $\Psi$. If $x_i((a,b))=0$, then either $x_i(a)=0$ or $x_i(b)=0$, or equivalently, either $a+e_i \notin S_1$ or $b+e_i \notin S_2$. Thus $a+e_i \notin S_1 \cap (S_2 + a-b)$ and so $x_i \cdot a_{a-b} = x_i \circ \Psi((a,b))=0=\Psi \circ x_i((a,b))$.
 
Otherwise, $x_i((a,b))=(a+e_i,b+e_i) \in S_1 \times S_2$ and so it follows that $\Psi \circ x_i((a,b))=(a+e_i)_{a-b}$. Meanwhile, $\Psi(a,b)=a_{a-b}$. As $a+e_i \in S_1$, $b+e_i \in S_2$, we have $a+e_i \in S_1\cap (S_2 + a-b)$, and so $x_i \cdot a_{a-b}=(a+e_i)_{a-b}$. Hence
\begin{equation*}
x_i \circ \Psi((a,b)) = \Psi \circ x_i \cdot (a,b)
\end{equation*}
This completes the proof.
\end{proof}

\medskip

\begin{rmk}
The situation can be visualized as follows. For two embedded skew shapes $S$ and $T$, the connected component of the skew shape in $M_{S} \wedge M_{T}$ containing some point $(a,b)$ is the intersection of $S$ with the the unique translate of $T$ that makes $a$ and $b$ coincide . Below is an example of $S$, $T$ and their intersection in red for $n=2$.

\begin{minipage}{.33\textwidth}
\begin{center}
\Ylinethick{1pt}
\gyoung(;,;;\bullet;,:;!<\Ylinethick{1pt}>;;)
\end{center}
\end{minipage}
\begin{minipage}{.33\textwidth}
\begin{center}
\Ylinethick{1pt}
\gyoung(;;;\bullet;;,:::;;)
\end{center}
\end{minipage}
\begin{minipage}{.33\textwidth}
\begin{center}
\Ylinethick{1pt}
\gyoung(:;,;!<\Yfillcolor{red}>;;\bullet;!<\Yfillcolor{white}>;,::;!<\Yfillcolor{red}>;;)
\end{center}
\end{minipage}
\end{rmk}

\medskip

\begin{eg}
Suppose the we have the following skew shapes $S$ and $T$ in $n=2$ dimensions.

\begin{minipage}{.5\textwidth}
\begin{center}
\Ylinethick{1pt}
\gyoung(;;;,::;)
\end{center}
\end{minipage}
\begin{minipage}{.5\textwidth}
\begin{center}
\Ylinethick{1pt}
\gyoung(;,;,;;)
\end{center}
\end{minipage}

To find the collection of skew shapes occurring in $M_S \wedge M_T$ we observe the nontrivial intersections of $S$ and $T$ under translation are given below with regions of intersection in red, and regions of nonintersection in yellow.

\begin{center}
\Ylinethick{1pt}
\Yfillcolor{yellow}
\gyoung(;,;,;!<\Yfillcolor{red}>;!<\Yfillcolor{yellow}>;;,:::;)
\gyoung(;,;,!<\Yfillcolor{red}>;;!<\Yfillcolor{yellow}>;,::;)
\gyoung(;,!<\Yfillcolor{red}>;!<\Yfillcolor{yellow}>;;,;;;)
\gyoung(!<\Yfillcolor{red}>;!<\Yfillcolor{yellow}>;;,;:;,;;)
\gyoung(:;,:;,;!<\Yfillcolor{red}>;;!<\Yfillcolor{yellow}>,::;)
\gyoung(:;,;!<\Yfillcolor{red}>;!<\Yfillcolor{yellow}>;,:;!<\Yfillcolor{red}>;)
\gyoung(;!<\Yfillcolor{red}>;!<\Yfillcolor{yellow}>;,:;;,:;;)
\gyoung(::;,::;,;;!<\Yfillcolor{red}>;!<\Yfillcolor{yellow}>;,::;)
\gyoung(::;,;;!<\Yfillcolor{red}>;,::;!<\Yfillcolor{yellow}>;)
\gyoung(;;!<\Yfillcolor{red}>;,::;!<\Yfillcolor{yellow}>,::;;)
\gyoung(;;;,::!<\Yfillcolor{red}>;!<\Yfillcolor{yellow}>,::;,::;;)
\end{center}

It follows that $M_S \wedge M_T$ decomposes into indecomposable modules corresponding to the following skew shapes with the indicated multiplicities:

\begin{center}
\Ylinethick{1pt}
\Yfillcolor{red}
8 \gyoung(;)
$\oplus$ 2 \gyoung(;;)
$\oplus$ 2 \gyoung(;,;)
\end{center}
\end{eg}

Note that we further decomposed the disconnected skew shape 
\begin{center}
\Ylinethick{1pt}
\Yfillcolor{red}
\gyoung(;,:;)
\end{center}
into its connected components.

\newpage

\address{\tiny DEPARTMENT OF MATHEMATICS AND STATISTICS, BOSTON UNIVERSITY, 111 CUMMINGTON MALL, BOSTON} \\
\indent \footnotesize{\email{dbeers@bu.edu}}

\address{\tiny DEPARTMENT OF MATHEMATICS AND STATISTICS, BOSTON UNIVERSITY, 111 CUMMINGTON MALL, BOSTON} \\
\indent \footnotesize{\email{szczesny@math.bu.edu}}

\end{document}